\DeclareMathAlphabet{\pazocal}{OMS}{zplm}{m}{n}
\DeclareSymbolFont{greekletters}  {OML}{ztmcm}{m}{it}
\DeclareMathSymbol{\chi}{\mathalpha}{greekletters}{"1F}
\DeclareMathOperator*{\esssup}{ess\,sup}
\newcommand{\ind}{\mathds{1}}
\newcommand{\ew}{\mathds{E}}
\newcommand{\pr}{\mathds{P}}
\newcommand{\Rbb}{\mathbb{R}}
\newcommand{\Nbb}{\mathbb{N}}
\newcommand{\Tbb}{\mathbb{T}}
\newcommand{\Zbb}{\mathbb{Z}}
\newcommand{\var}{\mathrm{var}}
\newcommand{\cov}{\mathrm{cov}}
\newcommand{\cf}{\mathcal{F}}
\newcommand{\cg}{\mathcal{G}}
\newcommand{\cm}{\mathcal{M}}
\newcommand{\ce}{\mathcal{E}}
\newcommand{\cb}{\mathcal{B}}
\newcommand{\ch}{\mathcal{H}}
\newcommand{\cs}{\mathcal{S}}
\def\1{{\ind}}
\def\argmin{\mbox{arg}\min}
\def\NAT@def@citea{\def\@citea{\NAT@separator}}
\theoremstyle{plain}
\newtheorem{theorem}{Theorem}[section]
\newtheorem{lemma}[theorem]{Lemma}
\newtheorem{folg}[theorem]{Corollary}
\theoremstyle{definition}
\newtheorem{defi}[theorem]{Definition}
\theoremstyle{remark}
\newtheorem{rem}{Remark}
\begin{document}

\articletype{ARTICLE MANUSCRIPT}

\title{Consistency of a nonparametric least squares estimator in integer-valued GARCH models}

\author{
\name{Maximilian Wechsung\textsuperscript{a,*}\thanks{\textsuperscript{*} Corresponding author. Email: maximilian.wechsung@charite.de}, Michael Neumann\textsuperscript{b}}
\affil{\textsuperscript{a}Institut f\"ur Biometrie und Klinische Epidemiologie, Charit\'e--Universit\"atsmedizin Berlin, \\ \, Charit\'eplatz 1, 10117 Berlin, Germany \\
\textsuperscript{b}Institut f\"ur Mathematik, Friedrich-Schiller-Universit\"at Jena \\ \,
Ernst-Abbe-Platz 2, 07743 Jena, Germany}
}

\maketitle

\begin{abstract}
We consider a nonparametric version of the integer-valued GARCH(1,1) model for time series of counts. The link function in the recursion for the variances is not specified by finite-dimensional parameters, but we impose nonparametric smoothness conditions. We propose a least squares estimator for this function and show that it is consistent with a rate that we conjecture to be nearly optimal.  \\[5pt]
\noindent \textit{Keywords}: INGARCH, nonparametric least squares, empirical process, mixing \\[5pt]
\noindent \textit{MSC 2010 subject classifications}: 62G20, 62M10 \\[5pt]
\noindent \textit{Number of words}: 7,664
\end{abstract}

\allowdisplaybreaks

\section{Introduction}

Time dependent count data appear in many branches of empirical research.
In an epidemiological context, count data in form of reported infections in a certain time interval are currently attracting considerable attention. 

A common model for the marginal conditional distributions of a time series of count variables
$\{Y_t\}_{t \in \Zbb}$ is the Poisson distribution, i.e.
$Y_t \, | \, Y_{t-1},Y_{t-2},\ldots \sim \text{Poiss}(\lambda_t)$
\citep{Kedem2002}.
Here the intensities $\{\lambda_t\}_{t \in \Zbb}$ are mere theoretical quantities defined
for modeling purposes. As such they are not observable. A temporal dynamic can be modeled
with a recursive relation for the intensities, $\lambda_t = m(Y_{t-1},\ldots,Y_{t-p};\lambda_{t-1}, \ldots,\lambda_{t-q})$
for some $p,q \in \Nbb_+$. This relation could as well include some exogenous variables on the right hand side.
However, in order to focus on the inherent dynamics of the process, we forgo the inclusion of such explanatory variables.
The function $m$ is called link function and is at the center of our interest.

The introduced Poisson autoregression model is very similar to the GARCH($p,q$) model that was introduced by
\cite{Bollerslev1986} and is constituted by a process $\{X_t\}$ with
$X_{t} | X_{t-1},X_{t-2},\ldots \sim N(0, \sigma_t^2)$ and
$\sigma_t^2 = \alpha_0 + \alpha_1 X_{t-1}^2 + \ldots \alpha_p X^2_{t-p} + \beta_1 \sigma_{t-1}^2 + \ldots + \beta_q \sigma_{t-q}^2$. Therefore, our Poisson model is often called integer-valued GARCH($p,q$) or INGARCH$(p,q)$ model.
This term was introduced by \cite{Ferland2006}, who showed existence and stationarity of an INGARCH$(p,q)$ process with linear link function.
Parameter estimation in a linear INGARCH(1,1) model was considered by \cite{Fokianos2009}, who proved consistency and asymptotic normality of the conditional maximum likelihood estimator. 

Both the structural as well as the distributional assumptions imposed by linear INGARCH models are rather limiting.
To overcome these impediments, two intuitive approaches seem appropriate. First, we could alter the distributional
assumption so as to allow for more flexibility in fitting the model. This approach was chosen by \cite{Zhu2011, Zhu2012}. He introduced count models based on the more flexible negative binomial
and generalized Poisson distributions, respectively. The structural components in these models, however, still resemble that of a linear INGARCH($p,q$) process. 

The alternative approach is to weaken the structural model assumption so as to allow for more general link functions.
For instance, \cite{Fokianos2011} extended their earlier result to INGARCH(1,1) models with link functions of the form
$m(y,\lambda) = f(\lambda) + g(y)$ for rather general but still parametric functions $f$ and $g$.
To our knowledge, the first published results on a purely nonparametric INGARCH(1,1) model is due to \cite{Neumann2011}. He proved absolute regularity of the count process $\{Y_t\}$ under the assumption that the
link function $m$ satisfies a strong contraction property. \cite{Doukhan2017} proved absolute regularity of general INGARCH$(p,q)$ processes, assuming only a semi-contractive link function. 

In the present paper, we follow the second approach to model generalization.  
So far the problem of estimating a nonparametric link function has not been addressed yet.
We propose and analyse a theoretical least squares estimator for the link function in a nonparametric INGARCH(1,1) model,
presupposing the same contraction property as \cite{Neumann2011}. Statistical inference in nonparametric GARCH$(1,1)$ models was considered by \cite{Meister2016}---the similarity between the GARCH and INGARCH models motivated us to mimic their estimator.
However, we pursue a different strategy in the asymptotic analysis which will be based on mixing properties. 
At the end we will briefly discuss our main theorem in light of related minimax results. Additionally we will show how an approximate version of our theoretical estimator can be implemented and illustrate that with a small simulation study.

\section{Assumptions and main result}

\begin{defi}\label{def:model}
For $\mathbb{T} \in \{ \Nbb, \Zbb\}$, let $\{(\lambda_t,Y_t)\}_{t \in \Tbb}$ be a stochastic process that is defined
on a probability space $(\Omega, \cf, \pr)$ and assumes values in $\Rbb \times \Nbb = \Rbb \times \{0,1,2, \ldots\}$.
Let $\cf_{t} := \sigma \{ \lambda_s, Y_s \colon s \leq t \}$ be the $\sigma$-field generated by the process up to time $t$,
and let $\cb$ denote the Borel $\sigma$-field over $\Rbb$. The bivariate process $\{(\lambda_t,Y_t)\}_{t \in \Tbb}$ is called
an \textit{INGARCH(1,1)} process if there exists a $(\cb \otimes 2^\Nbb - \cb)$-measurable function
$m \colon [0,\infty) \times \Nbb \to [0,\infty)$ such that 
\begin{IEEEeqnarray*}{rCL}
\pr^{Y_t | \cf_{t-1}} = \textit{Poiss}(\lambda_t) 
& \hspace{1 cm} \text{ and } \hspace{1 cm} 
& \lambda_t = m(\lambda_{t-1},Y_{t-1})\,.
\end{IEEEeqnarray*} 
The processes $\{Y_t\}$ and $\{\lambda_t\}$ are called \textit{count process} and \textit{intensity process} respectively. The function $m$ is called \textit{link function}.
\end{defi}

Given a link function $m$, the corresponding one-sided INGARCH(1,1) process (i.e. with $\mathbb{T} = \Nbb$) is well defined.
It is a Markov process and can be constructed explicitly by specifying the transition kernel.
A two-sided version (i.e. $\mathbb{T} = \Zbb$) is well defined if a stationary distribution of the one-sided process exists \citep{Doukhan2017, Wechsung2019}.   

Recall that the intensities are hidden variables. A prerequisite for estimating the link function without knowledge
of the intensities is that a single intensity $\lambda_t$ does not exclusively carry substantial information about
the whole process. In other words, we require the influence of $\lambda_t$ to fade with progressing time.
This property can be compelled by stipulating that the link function be contractive.
 	 
\begin{defi}
\label{def:semicont}
For $M<\infty$
and numbers $L_1,L_2 \geq 0$ such that
$L_1 + L_2 < 1$, the class $\cg = \cg(M,L_1,L_2)$ of contractive link functions with domain $D:=[0,M]\times \Nbb$
and smoothness parameters $L_1$ and $L_2$ is defined as the set of all functions $g \colon D  \to [0,M]$
with the property that
\begin{align}\tag{C} 
|g(\lambda_1,y_1) - g(\lambda_2,y_2)| \leq L_1 |\lambda_1 - \lambda_2| + L_2 |y_1-y_2|\,,
\end{align}
for all $(\lambda_1,y_1), (\lambda_2,y_2) \in D$. 
\end{defi}
In case of a contractive link function $m \in \cg(M,L_1,L_2)$, the influence of a single intensity variable
indeed vanishes with time:
define inductively 
$m^{[0]}(\lambda,Y_t) := m(\lambda,Y_t)$ and
$m^{[k]}(\lambda,Y_t,\ldots,Y_{t+k}) := m(m^{[k-1]}(\lambda,Y_t,\ldots,Y_{t+k-1}), Y_{t+k})$.
Observe that due to (C)
\begin{align*}
\big| \lambda_{t+1} \,-\, m^{[0]}(0,Y_t) \big|
\,=\, \big| m(\lambda_t,Y_t) \,-\, m(0,Y_t) \big| \,\leq\, L_1 \, \lambda_t
\end{align*}
and therefore
\begin{IEEEeqnarray}{rCl}
\IEEEeqnarraymulticol{3}{l}{\big| \lambda_{t+k} \,-\, m^{[k-1]}(0,Y_t,\ldots ,Y_{t+k-1}) \big|   \nonumber } \\
 \qquad 
& = & \big| m^{[k-1]}(\lambda_t,Y_t,\ldots,Y_{t+k-1}) \,-\, m^{[k-1]}(0,Y_t,\ldots,Y_{t+k-1}) \big| \nonumber \\
& = & \Big| m\big( m^{[k-2]}(\lambda_t,Y_t,\ldots,Y_{t+k-2}), Y_{t+k-1} \big)
\,-\, m\big( m^{[k-2]}(0,Y_t,\ldots,Y_{t+k-2}), Y_{t+k-1}\big) \Big| \nonumber \\
& \leq & L_1 \, \big| m^{[k-2]}(\lambda_t,Y_t,\ldots,Y_{t+k-2}) \,-\, m^{[k-2]}(0,Y_t,\ldots,Y_{t+k-2}) \big| \nonumber \\
& \leq & \ldots \,\leq\, L_1^k \, \lambda_t \to 0 
\label{C2}
\end{IEEEeqnarray}
in probability as $k \to \infty$. 
This also ensures the existence of a stationary distribution $\pi$ because two one-sided INGARCH(1,1) processes with the same link function but different starting points would eventually behave alike, indicating that they have reached a stationary regime 
\citep{Neumann2011, Doukhan2017}. Hence, the two-sided INGARCH(1,1) process is also well defined 
\citep{Doukhan2017, Wechsung2019}. Furthermore, equation \eqref{C2} implies $\cf_t = \sigma\{Y_t,Y_{t-1},\ldots\}$, which means that all necessary information for statistical inference is carried by the count process 
\citep{Neumann2011}. Thus, from a statistical point of view, estimating the link function without observing any intensity should be a feasible task. 

We propose a minimum contrast estimator for the link function.
Recall that the functional $X \mapsto \ew\big(Y_{n+1} - X \big)^2$ is minimized on the set of all $\sigma\{Y_0,\ldots,Y_n\}$-measurable random variables by $X = \ew[Y_{n+1} \, | \, Y_0,\ldots,Y_n]$. Furthermore, the contraction property (C) implies $\big|\ew[Y_{n+1} \, | \, Y_0,\ldots,Y_n] - m^{[n]}(0,Y_{0}, \ldots, Y_n)\big| = O(L_1^n)$ for $n \to \infty$. Hence, $m$ is an approximate minimum of the contrast functional $\Phi : g \mapsto \ew\big(Y_{i+1} - g^{[i]}(0,Y_{0}, \ldots, Y_i)\big)^2$ over $\cg$, and a sensible estimator on the basis of observations $Y_0,\ldots,Y_n$ might be obtained by minimizing an empirical analogue of that functional.  

\begin{defi}\label{defi:estimator}
Denote with $\cb(\cg)$ the Borel $\sigma$-field over the normed space $(\cg,\|\cdot\|_\infty)$,
and let $\cg^{\textit{cand}}[Y_0,\ldots,Y_n] \subset (\cg,\|\cdot\|_\infty)$ be a non-void closed and possibly random subset of candidate functions.
A random element $\hat{m}_n[Y_0,\ldots,Y_n] \colon (\Omega,\cf) \to (\cg,\cb(\cg))$ that minimizes the empirical contrast functional $\Phi_n(g)\colon g \mapsto \frac{1}{n} \sum_{i=0}^{n-1} \big(Y_{i+1} - g^{[i]}(0,Y_0, \ldots, Y_i)\big)^2
$ over $\cg^{\textit{cand}}$ is called a least squares estimator based on $n+1$ consecutive observations of the count process and the candidate set $\cg^{\textit{cand}}$.
\end{defi}

The set of least squares estimators is non-void. To prove this assertion, it has to be shown that, given realizations of the count process, the minimum of $\Phi_n(g)$ over $\cg^{\textit{cand}}$ is attained and that there exists a
$(\cb(\Rbb^{n+1})-\cb(\cg))$-measurable selection function $T$ assuming values in the set of the functional's minimizers. The random element $T(Y_0,\ldots,Y_n)$ would then qualify as a least squares estimator.
The functional $\Phi_n$ attains its minimum over $(\cg^{\textit{cand}},\|\cdot\|_\infty)$ since it is continuous with respect to $\|\cdot\|_\infty$ and because the space $(\cg,\|\cdot\|_\infty)$ is compact. The existence of a measurable selector can be proven by a successive application of Jennrich's Lemma 
\citep{jennrich1969} and the Kuratowski-Ryll-Nardzewski selection theorem 
\citep[cf.][]{aliprantis2007}. The formal argument is rather technical and will be omitted here. For details we refer to \cite{Wechsung2019}.

Before we present our main result on the asymptotic behaviour of the least squares estimator, we summarize our technical assumptions. 
\begin{itemize}
\item[(A1)]
For $M<\infty$ and non-negative constants $L_1,L_2$ with $L_1+L_2<1$, let the data generating process $\{(Y_t,\lambda_t)\}_{t \in \Zbb}$
be a stationary INGARCH(1,1) process with link function $m \in \cg(M,L_1,L_2)$ and stationary distribution $\pi = \pr^{(\lambda_t,Y_t)}$.

\item[(A2)] Denote with $Y_{(n)} = \max \{Y_0, \ldots, Y_{n-1}\}$ the largest of the first $n$ observations, and define the random set
\begin{align*}
\cg_n^{\textit{cand}}(M,L_1,L_2) := \big\{ g \in \cg(M,L_1,L_2) \colon  g(\, \cdot\,, y)=g(\, \cdot\,, Y_{(n)}) \text{ for all } y > Y_{(n)} \big\}\,.
\end{align*}
The estimator $\hat{m}_n$ shall be a least squares estimator in the sense of Definition \ref{defi:estimator} with candidate set $\cg_n^{\textit{cand}}(M,L_1,L_2)$.
\end{itemize}

Regarding assumption (A2), note that the maximum $Y_{(n)}$ does not grow faster than $\log n$ because due to an exponential tail bound for the Poisson distribution \citep[p. 116]{gine2015} 
\begin{align*}
\pr\big\{\max_{0\leq i \leq n-1} Y_i \geq C \log n\big\} 
&\leq n \, \pr \big\{ Y_0-\lambda_0 \geq C \log n - \lambda_0\big\} \\
&\leq n \, \ew \Big[ \pr \big\{ Y_0-\lambda_0 \geq C \log n - \lambda_0 \; \big| \; \lambda_0 \big\} \Big] \\
&\leq n \, \ew \Big[ \exp\Big(- \frac{(C \log n - \lambda_0)^2}{2\lambda_0 + \frac{2}{3}(C \log n - \lambda_0)} \Big) \Big] \\
&\leq \exp\big((1 - C/2	) \log n + M \big) \to 0\,,
\end{align*}
as $n \to \infty$, if $C > 2$. This means that for $B_n = \lceil 3\, \log n \rceil$
\begin{align}\label{eq:cand}
\pr \big\{ \cg_n^{\textit{cand}}(M,L_1,L_2) \subset  \cg(B_n,M,L_1,L_2)\big\} \to 1\,,
\end{align}
as $n \to \infty$, where $\cg(B_n,M,L_1,L_2) := \big\{g \in \cg(M,L_1,L_2) \colon g(\,\cdot\,,y) = g(\,\cdot\,,B_n) \text{ for all } y \geq B_n \big\}$.

The following theorem is our main result. It establishes the rate of convergence of the nonparametric least squares estimator in terms of the $L_2$ loss with respect to the stationary distribution $\pi$. 

\begin{theorem}\label{thm:main}
Suppose (A1) and (A2) hold. Then the $L_2(\pi)$ loss of the estimator, $L(\hat{m}_n,m) = \int \big(\hat{m}_n(\lambda,y) - m(\lambda,y)\big)^2 \pi(d\lambda,dy)$, is of order $O_\pr\big(n^{-2/3} (\log n)^2 \big)$.
\end{theorem}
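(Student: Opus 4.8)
The plan is to follow the standard route for minimum-contrast estimators---a \emph{basic inequality} obtained from minimality, together with a uniform bound on the associated empirical process---adapted to two features of the present setting: the contrast is expressed through the iterated maps $g^{[i]}$ rather than through $g$ directly, and the observations form a $\beta$-mixing rather than an i.i.d.\ sequence. \textbf{Step 1 (reduction).} By \eqref{eq:cand} it is enough to argue on the event $E_n:=\{\cg_n^{\textit{cand}}(M,L_1,L_2)\subset\cg(B_n,M,L_1,L_2)\}$, which in addition forces $Y_i\le Y_{(n)}\le B_n=\lceil 3\log n\rceil$ for $0\le i\le n-1$ and satisfies $\pr(E_n)\to1$; on $E_n$ we may treat $\hat m_n$ as a member of the class $\cg(B_n,M,L_1,L_2)$, which is fixed given $n$ and whose $\|\cdot\|_\infty$-covering numbers satisfy $\log N(\varepsilon,\cg(B_n,M,L_1,L_2),\|\cdot\|_\infty)=O(B_n/\varepsilon)=O(\log n/\varepsilon)$. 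Put $\psi_i(g):=g^{[i]}(0,Y_0,\ldots,Y_i)$, $\hat\lambda_i:=\ew[Y_{i+1}\mid Y_0,\ldots,Y_i]$ and $\varepsilon_{i+1}:=Y_{i+1}-\hat\lambda_i$; the argument behind \eqref{C2} gives $|\psi_i(m)-\hat\lambda_i|=O(L_1^{\,i})$, makes $\{\varepsilon_{i+1}\}$ a martingale difference sequence for $\{\sigma(Y_0,\ldots,Y_i)\}$, and yields $\ew[\varepsilon_{i+1}^2\mid Y_0,\ldots,Y_i]\le M+O(L_1^{\,i})$.

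\textbf{Step 2 (basic inequality).} On $E_n$ the candidate set contains a copy of $m$ frozen above $Y_{(n)}$, which leaves every $\psi_i(m)$ unchanged, so $\Phi_n(\hat m_n)\le\Phi_n(m)$. Expanding the squares, subtracting, and using the martingale-difference property together with $|\psi_i(m)-\hat\lambda_i|=O(L_1^{\,i})$, this rearranges into
\[
\frac1n\sum_{i=0}^{n-1}\big(\psi_i(\hat m_n)-\psi_i(m)\big)^2\ \le\ \frac2n\sum_{i=0}^{n-1}\varepsilon_{i+1}\big(\psi_i(\hat m_n)-\psi_i(m)\big)\ +\ R_n,
\]
where the remainder $R_n=O_\pr(1/n)$ collects the geometric error terms.

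\textbf{Step 3 (comparison with the $L_2(\pi)$ loss, and the stochastic term).} I would then establish two facts. First, the \emph{path loss} on the left dominates, up to negligible terms, a fixed multiple of $L(\hat m_n,m)$: from $\psi_i(g)-\psi_i(m)=(g-m)(\psi_{i-1}(g),Y_i)+\big[m(\psi_{i-1}(g),Y_i)-m(\psi_{i-1}(m),Y_i)\big]$, the contraction (C), and summation of a geometric series, $\tfrac1n\sum_i(\psi_i(g)-\psi_i(m))^2$ is comparable to $\tfrac1n\sum_i\big((g-m)(\psi_{i-1}(g),Y_i)\big)^2$; taking expectations and using stationarity of $\{Y_t\}$ (or, equivalently, concentration of this average, since $\{Y_t\}$ is absolutely regular with geometrically decaying coefficients \citep{Neumann2011,Doukhan2017}), this reduces to an integral of $(g-m)^2$ against the law of $(\psi_{i-1}(g),Y_i)$, which \eqref{C2} identifies with $\pi$ up to a discrepancy absorbed at the cost of a $\log n$ factor. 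Second, for $g$ in $\{g\in\cg(B_n,M,L_1,L_2):L(g,m)\le\delta^2\}$ I would control the stochastic term uniformly: $\{\varepsilon_{i+1}(\psi_i(g)-\psi_i(m))\}_i$ is a martingale difference array, bounded on $E_n$ by $\lesssim\log n$ and with conditional variances $\lesssim\lambda_{i+1}(\psi_i(g)-\psi_i(m))^2$; partitioning $\{0,\ldots,n-1\}$ into blocks of length of order $\log n$, decoupling them by absolute regularity, applying a Bernstein inequality on each block, and taking a union bound over an $\varepsilon$-net of $\cg(B_n,M,L_1,L_2)$ of log-cardinality $O(\log n/\varepsilon)$ gives
\[
\sup_{g:\,L(g,m)\le\delta^2}\ \Big|\frac1n\sum_{i=0}^{n-1}\varepsilon_{i+1}\big(\psi_i(g)-\psi_i(m)\big)\Big|\ =\ O_\pr\!\Big(\delta^2+\frac{(\log n)^2}{n\,\delta}\Big)
\]
for $\delta\gtrsim n^{-1/3}\log n$. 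Inserting both facts into Step 2 and peeling over dyadic ranges of $L(\hat m_n,m)$ yields, on $E_n$, $L(\hat m_n,m)\lesssim\delta_n^2$ with $\delta_n^2$ the solution of $\delta^2\asymp(\log n)^2/(n\,\delta)$, i.e.\ $\delta_n^2\asymp n^{-2/3}(\log n)^2$, which is the rate claimed in the theorem.

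\textbf{Expected main obstacle.} The delicate part is the first half of Step 3. The least-squares criterion controls only the path loss $\tfrac1n\sum_i(\psi_i(\hat m_n)-\psi_i(m))^2$, in which $\hat m_n$ and $m$ are evaluated at the $\hat m_n$-\emph{filtered} intensity $\psi_{i-1}(\hat m_n)$ rather than at a genuine value of the hidden intensity $\lambda$; the two disagree because the filter is exact only in the limit and, more importantly, because it is run under the misspecified $\hat m_n$, the size of the mismatch depending on $L(\hat m_n,m)$ itself. Turning this into a true lower bound for $L(\hat m_n,m)=\int(\hat m_n-m)^2\,d\pi$ requires pushing the geometric-forgetting bound \eqref{C2} through the nonlinear recursion and controlling, uniformly over $g\in\cg(B_n,M,L_1,L_2)$, a change of measure between the law of $(\psi_{i-1}(g),Y_i)$ and $\pi$; this comparison, whose Radon--Nikodym density on $E_n$ is of size $O(\log n)$, together with the choice of a single-net-plus-Bernstein bound for the empirical process instead of a refined chaining argument, is what accounts for the excess $(\log n)^2$ beyond the rate $n^{-2/3}$ we conjecture to be optimal.
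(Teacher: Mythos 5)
You have correctly isolated the crux, but the way you propose to resolve it does not work, and it is exactly the step the paper spends its key lemma on. Your Fact 1 asks for a change of measure between the law $\mu_g$ of $(\psi_{i-1}(g),Y_i)$ and $\pi$ with Radon--Nikodym density of size $O(\log n)$, uniformly over $g\in\cg(B_n,M,L_1,L_2)$. No such density exists in general: $\psi_{i-1}(g)=g^{[i-1]}(0,Y_0,\ldots,Y_{i-1})$ is a function of finitely many integer-valued counts, so the first marginal of $\mu_g$ is purely atomic, whereas the first marginal of $\pi$ (the stationary law of the hidden intensity) is typically non-atomic (already in the linear INGARCH case); the two measures are then mutually singular, and even where a comparison is meaningful nothing in the model produces a logarithmic bound, uniformly in $g$, in the direction you need ($d\pi/d\mu_g$ bounded, so that the path loss dominates $L(g,m)$). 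The paper does not attempt any change of measure. Instead, its Lemma \ref{Lemma_1} proves a population-level inequality, $\ew\big[m^{[t]}(\bm{Y}_0^{t})-g^{[t]}(\bm{Y}_0^{t})\big]^2\ \geq\ \tfrac{(1-\varepsilon)^2}{12}\,L(g,m)-2M^2L_1^{t}$ with $t\asymp\log n$, obtained by pushing the contraction (C) and the Lipschitz continuity of $g-m$ through the iteration: the evaluation point produced by the $m$-filter, $\psi_{i-1}(m)$, is within $ML_1^{i-1}$ of the true intensity $\lambda_i\sim\pi$, and the recursion is controlled so that the last-step discrepancy is not swallowed by the propagated terms. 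This is a genuinely delicate argument (several pages in the cited thesis, whence the constant $1/12$), and without it, or a substitute for it, your Step 3 does not close; the peeling and the final rate computation all hinge on it.

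A secondary, repairable issue: you keep the full-memory iterates $\psi_i(g)$, which depend on all of $Y_0,\ldots,Y_i$, and then invoke blocking plus absolute regularity. Berbee-type decoupling produces independent blocks only for functionals of finitely many consecutive coordinates, which is why the paper first replaces $g^{[i]}(0,Y_0,\ldots,Y_i)$ by the lag-truncated $g^{[t]}(0,Y_{i-t},\ldots,Y_i)$, $t\asymp\log n$, paying an $O(L_1^t)$ truncation error, before coupling, symmetrizing and chaining. Alternatively, your martingale-difference framing of $\varepsilon_{i+1}=Y_{i+1}-\ew[Y_{i+1}\mid Y_0,\ldots,Y_i]$ would let you dispense with mixing altogether for the stochastic term via a Bernstein/Freedman inequality for martingales, uniformized over a sup-norm net of log-cardinality $O(\log n/\varepsilon)$ --- a cleaner route than the paper's coupling argument --- but its variance term is the path loss, so it again requires the comparison inequality above, not the localization $L(g,m)\le\delta^2$ you write, unless that comparison has already been established.
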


The proof of this theorem is given in Section \ref{sec:proof_main}. It will proceed along a series of auxiliary results. For the sake of clarity, the proofs of most auxiliary results are deferred to Section \ref{sec:proof_aux}. In the next two sections we will shortly discuss the theoretical and practical implications of Theorem \ref{thm:main}.

\section{Minimax rates}\label{sec:minimax}

The rate $L(\hat{m}_n,m) = O_\pr(n^{-2/3} (\log n)^2)$ reflects the size of the function class $\cg$ in terms of the smoothness of the candidate functions and the dimension of their domains. In nonparametric regression and density estimation with i.i.d. samples, lower bounds for the squared $L_2$-loss of estimators for functions with degree of smoothness $\beta$, in terms of a Sobolev parameter, and domain dimension $d$ are typically given by $n^{-2\beta / (2\beta + d)}$, up to a positive multiplicative constant 
\citep{Tsybakov2008}.
In our model, the discrete nature of the count component and the slow growth of $Y_{(n)} = O_\pr(B_n)$ essentially reduce the problem to a parametric one in the second component. Thus, the effective dimension of the nonparametric estimation problem is $d=1$.
As to the degree of smoothness, the contractive property is a tighter version of Lipschitz continuity. Consequently, $\int_{[0,M]} \big|\frac{\partial}{\partial \lambda} m(\lambda,y_0)\big|^2 \ d\lambda < \infty$ for any $y_0 \in \{0,\ldots,B_n\}$
\citep[cf.][]{Bass2013}, i.e. $m(\,\cdot\,, y)$ is an element of the Sobolev class of functions with smoothness parameter $\beta = 1$ 
\citep[cf.][]{Tsybakov2008}. Hence, we suspect the optimal rate in our INGARCH(1,1) model to be of order $n^{-2/3}$. 

This conjecture is corroborated	by the following result for a nonparametric GARCH(1,1) model. Suppose the corresponding link function $m$ belongs to a Hölder class of monotone functions with smoothness parameter $\beta$ and domains with dimension $d=2$. Imposing some additional regularity conditions,
\cite{Meister2016} proved that in this case 
\begin{align}
\inf_{\{\hat{m}_n\}} \ \liminf_{n \to \infty} \  n^{2\beta / (2 \beta + d)} \sup_{m} \ \ew \int \big(m-\hat{m}_n\big)^2 \ d\pi > 0 \,.
\end{align} 
There, $\pi$ denotes the stationary distribution of the data generating GARCH$(1,1)$ process, and the infimum is taken over all sequences of estimators. 

In conclusion, we strongly suspect that the rate we provide in Theorem \ref{thm:main} is optimal up to the logarithmic term.

\section{Practical estimation and computer experiments}\label{sec:practical}

In light of the account of section \ref{sec:minimax}, our main result encourages to use a least squares approach to estimate the link function in a nonparametric INGARCH(1,1) model. One possibility to translate the theoretical concept of a nonparametric least squares estimator into a feasible procedure is via the methods of sieves.  
The sieved least squares estimator $\tilde{m}_n$ minimizes the functional $\Phi_n$ over a finite subset $\tilde{\cg}_n^{\textit{cand}} \subset \cg_n^{\textit{cand}}$. If the sequence of these finite candidate sets satisfy
$\sup_{g \in \cg_n^{\textit{cand}}} \inf_{g_n \in \tilde{\cg}_n^{\textit{cand}}} \| g - g_n\|_\infty \asymp n^{-1/3}$, we can expect $\tilde{m}_n$ to have asymptotic properties similar to $\hat{m}_n$ \citep{gyorfi2002}. 
We present a possible construction based on B-splines.

Let the equidistant knot sequence $\{\xi_p\}_{p=-2}^{l_n+2}$ be given by
\begin{align*}
\xi_{-2} < \xi_{-1} < \xi_0 = 0 < \xi_1 \ldots < \xi_{l_n} = M < \xi_{l_n+1}  
< \xi_{l_n+2}\,.
\end{align*} 
The knot distance is denoted by $\Delta_n := \xi_{i+1}-\xi_i$ for $i\in \{-2, \ldots,l_n+1\}$. Let $A_n$ be an equidistant partition of $[0,M]$ with $\frac{l_n}{2}+1 \leq \# A_n \leq C \, l_n$ points; $A_n = (\alpha_0,\alpha_1, \ldots, \alpha_{K_n})$ in ascending order with $a_0 = 0$ and $a_{K_n} = M$. The sequence 
$\{\Delta_n\}$ is required to satisfy $\Delta_n \asymp n^{-1/3}$. Note that then $l_n = \frac{M}{\Delta_n} \asymp n^{1/3}$. Let $\cs(k;\xi_{-k}, \ldots, \xi_{l+k}; A_n)$ denote the linear combinations of the normalized B-splines $\big\{ N_{p,k}|_{[0,M]} \big\}_{p=-k}^{l-1}$ with coefficients $\alpha_p$ from the set $A_n$. The grid $\tilde{\cg}_{n}^{\textit{cand}}$ defined by
\begin{align*}
\tilde{\cg}_{n}^{\textit{cand}} := \big\{s \in \cg_n^{\textit{cand}} \colon s(\,\cdot \, , y) \in \cs(2;\xi_{-2}, \ldots, \xi_{l_n+2} ; A_n) \ \textit{ for all } y \in \{0, \ldots , Y_{(n)}\} \big\} 
\end{align*}
approximates $\cg_n^{\textit{cand}}$ sufficiently well \citep{Wechsung2019}. For $\bm{\alpha} = \big(\alpha_{-2}(y) , \ldots, \alpha_{l_n-1}(y) \big)_{ y = 0, \ldots, Y_{(n)}} \in A_n^{(l_n+2)Y_{(n)}}$ and $S_{n} \bm{\alpha}\, [\lambda,y] = \sum_{p=-2}^{l-1} \alpha_p(y) \, N_{p,2}(\lambda)$, we can compute the estimator $\tilde{m}_n$ by minimizing the functional 
\begin{align}\label{eq:opt}
\bm{\alpha} \mapsto \sum_{i=0}^{n-1} \Big(y_{i+1} - \big(S_{n}\bm{\alpha}\big)^{[i]} [0, y_{0}, \ldots , y_i] \Big)^2
\end{align}
under the restriction that $S_{n}\bm{\alpha} \in \cg_n^{\textit{cand}}$.

We illustrate the sieved least squares estimator in a series of computer simulations. Having defined a link function
\begin{align*}
m(\lambda,y) = \Big( a + b \, \lambda + c \, \big(y \wedge 5 \big) + d \,\big[ \sin \left(\frac{2\pi}{\nu} \, \lambda\right) + \cos\left(\frac{2\pi}{\nu} \, \big( y \wedge 5 \big) \right) \big] \Big) \wedge M 
\end{align*}     
with $M=2$, $a = b = c = 0.3$, $d=-0.1$, and $\nu = 2$, we simulated 1050 realizations of the corresponding INGARCH(1,1) process, discarding the first 50 realizations in order to reach an approximately stationary regime. The link function $m$ and the simulated count observations are depicted in Figure \ref{fig:1}. To specify the set of candidate functions, we chose the underlying knot sequence to be $\{-0.4, -0.2,0.0, \ldots, 2.0, 2.2, 2.4\}$. The set $A_n$ was chosen to be an equidistant partition of the interval $[0,2]$ into $20$ parts. We approximately solved the resulting global optimization problem using the Genetic Algorithm (GA), a Monte Carlo method for global optimization problems, implemented in form of the MATLAB\textsuperscript{\textregistered} (version R2018a) function \texttt{ga()}. We applied the algorithm 4 times, yielding the 4 independent estimations that are depicted in Figure \ref{fig:2}. For the implementation we used MATLAB\textsuperscript{\textregistered} version R2018a. The code is available from the authors upon request.

\section{Conclusion}

We have shown that in principle a sufficiently regular link function of an INGARCH(1,1) process with hidden intensities can be estimated using a nonparametric least squares estimator. Furthermore, we have good reason to conjecture that this strategy is asymptotically nearly optimal in the minimax sense. For the purpose of practical implementation, we proposed to use a sieved version of our theoretical estimator and demonstrated the principle feasibility of this approach in numerical experiments. In our view, there is still a large potential to enhance the quality of this estimation method by improving the solver of the global optimization problem \eqref{eq:opt}. This is certainly a complicated problem itself, which we therefore leave for future research.

Essential for our theoretical arguments are of course the properties of the Poisson distribution. We settled for this distributional assumption because, in our perception, models based on the Poisson distribution still predominate the literature on count data. However, we acknowledge the associated practical drawbacks and deem it most desirable to combine our methods with more flexible distributional assumptions like those proposed by \cite{Zhu2011, Zhu2012}. In order to obtain a useful model, the inclusion of exogenous explanatory variables is desirable as well. We conjecture that the methods invoked in our proofs are also applicable in the context of those generalizations. Of course, the distribution under consideration must be good natured in the sense that, first, it allows for a proof of uniform mixing of the count process with geometrically decaying mixing coefficients, and, second, it is not too heavy tailed. The generalized Poisson distribution, for instance, seems to be suitable. This issue should be discussed in a separate paper.

\newpage

\begin{figure}[h!]
\centering
  \subfloat[3D plot of the true intensity function.]{\includegraphics[width = 0.45 \textwidth]{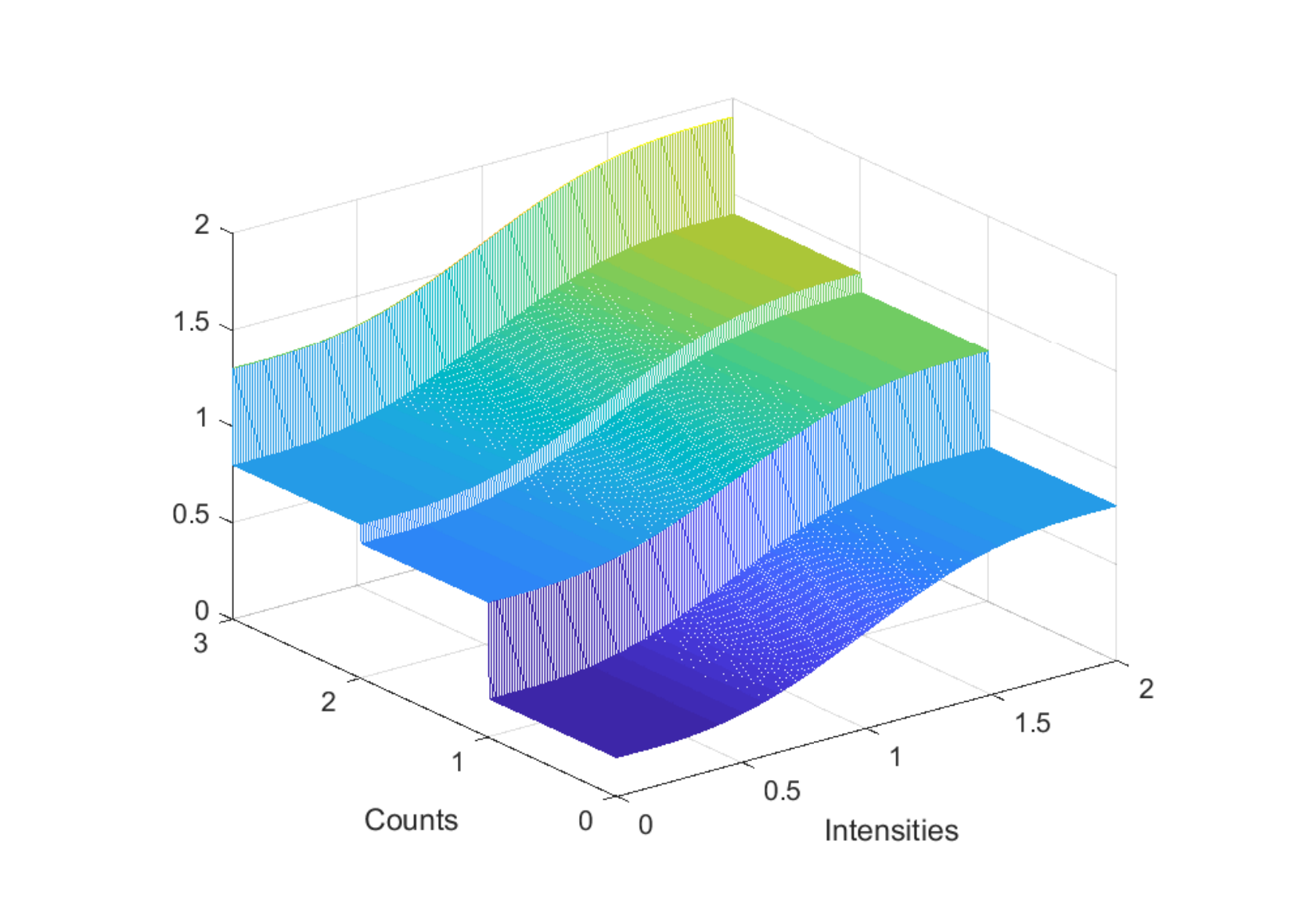}\label{fig:1.1}}
  \hspace{0.5cm}
  \subfloat[1000 realizations of the pairs $(\lambda_t,Y_t)$]{\includegraphics[width=0.45 \textwidth]{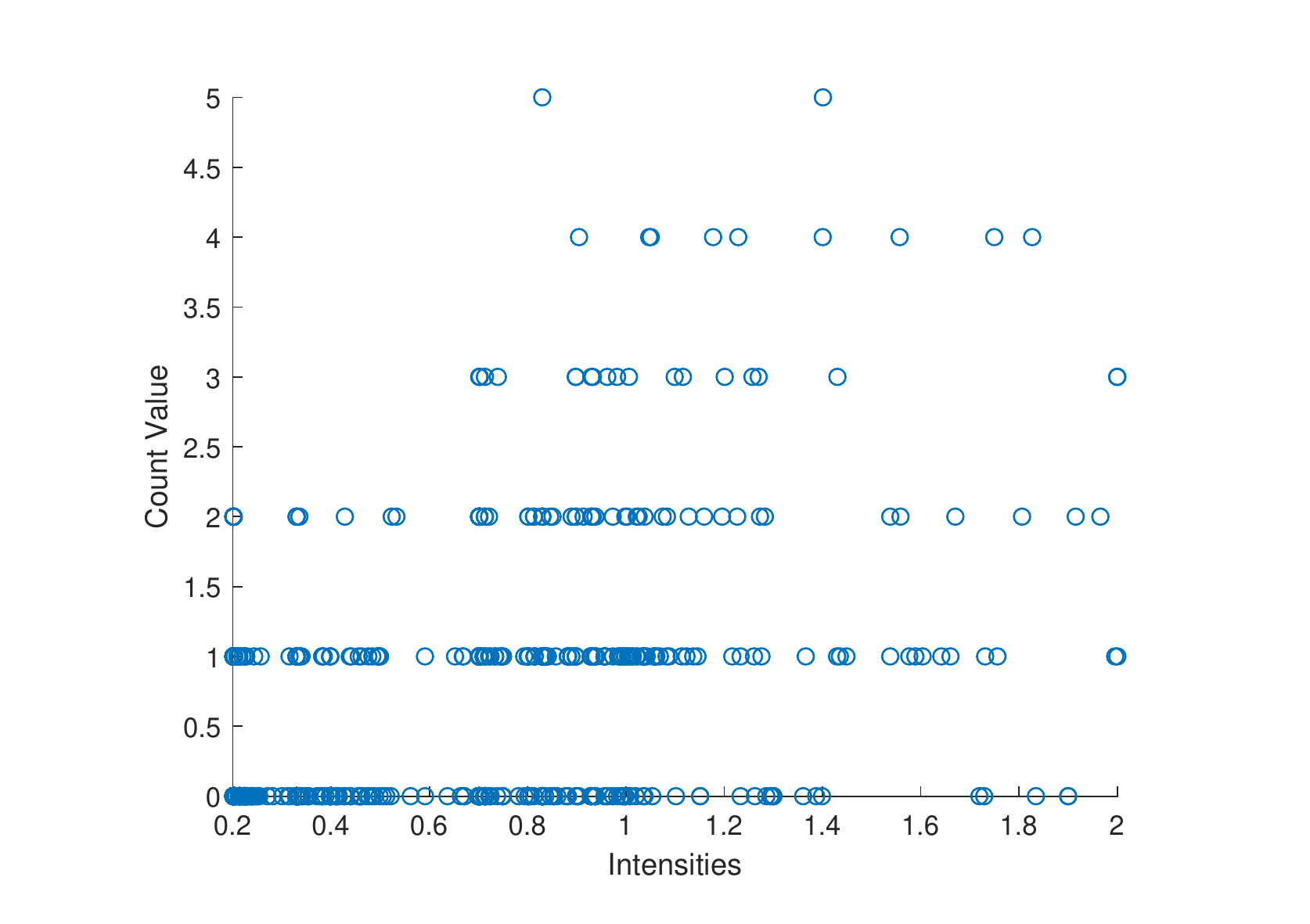}\label{fig:1.2}}
  \vspace{0.5cm}
   \caption{Underlying data for the numerical experiments. In (a) the true link function $m$ is shown; (b) shows the 1000 realizations of the count process that were used for estimating $m$.}\label{fig:1}
\end{figure}

\begin{figure}[h!]
\centering
  \subfloat[Estimation 1: MSE = 0.0383]{\includegraphics[width = 0.45\textwidth]{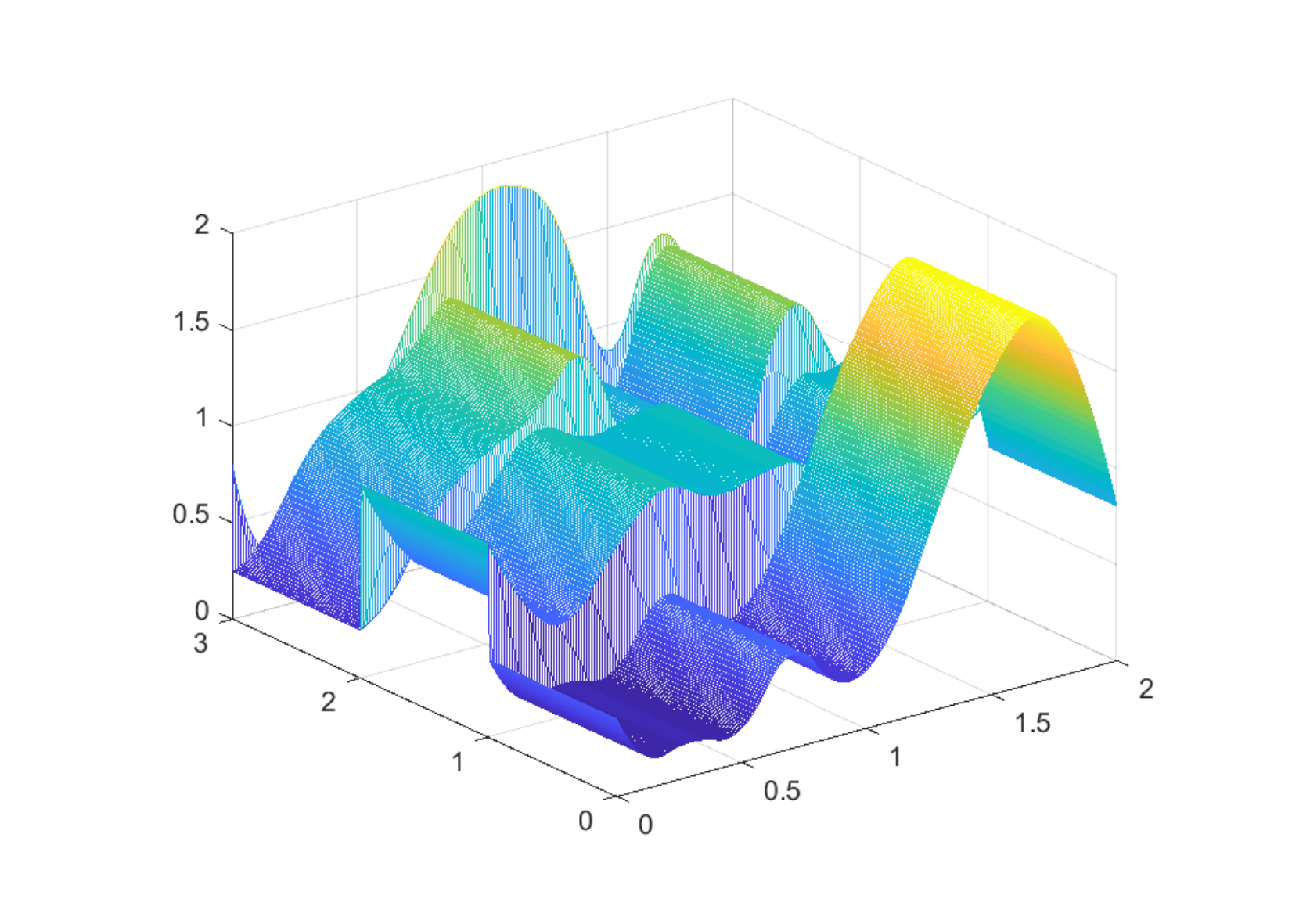}\label{fig:2.1}}
  \hspace{0.5cm}
  \subfloat[Estimation 2: MSE = 0.0277]{\includegraphics[width=0.45\textwidth]{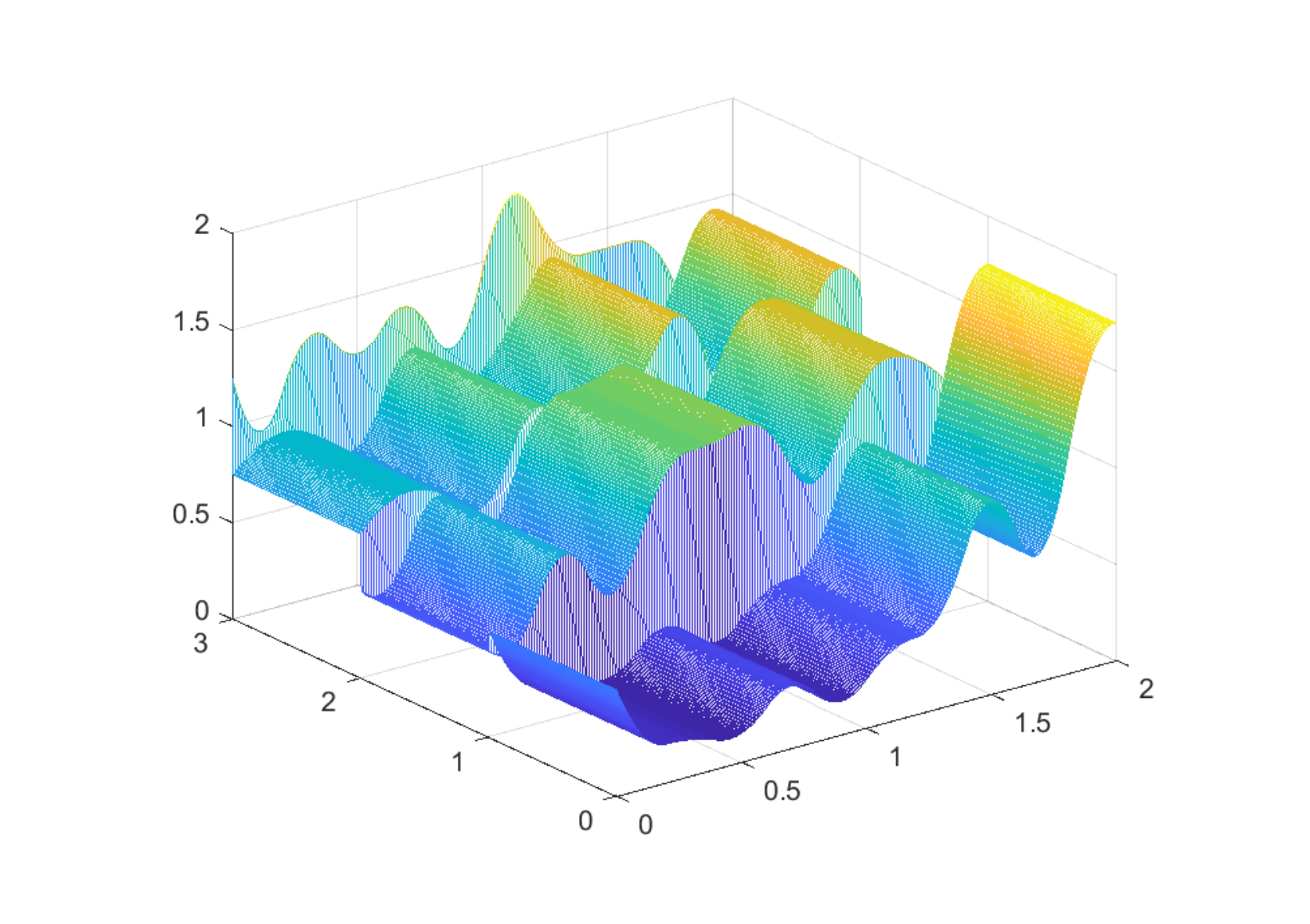}}
  \vspace{0.5cm}
  \subfloat[Estimation 3: MSE = 0.0278]{\includegraphics[width=0.45\textwidth]{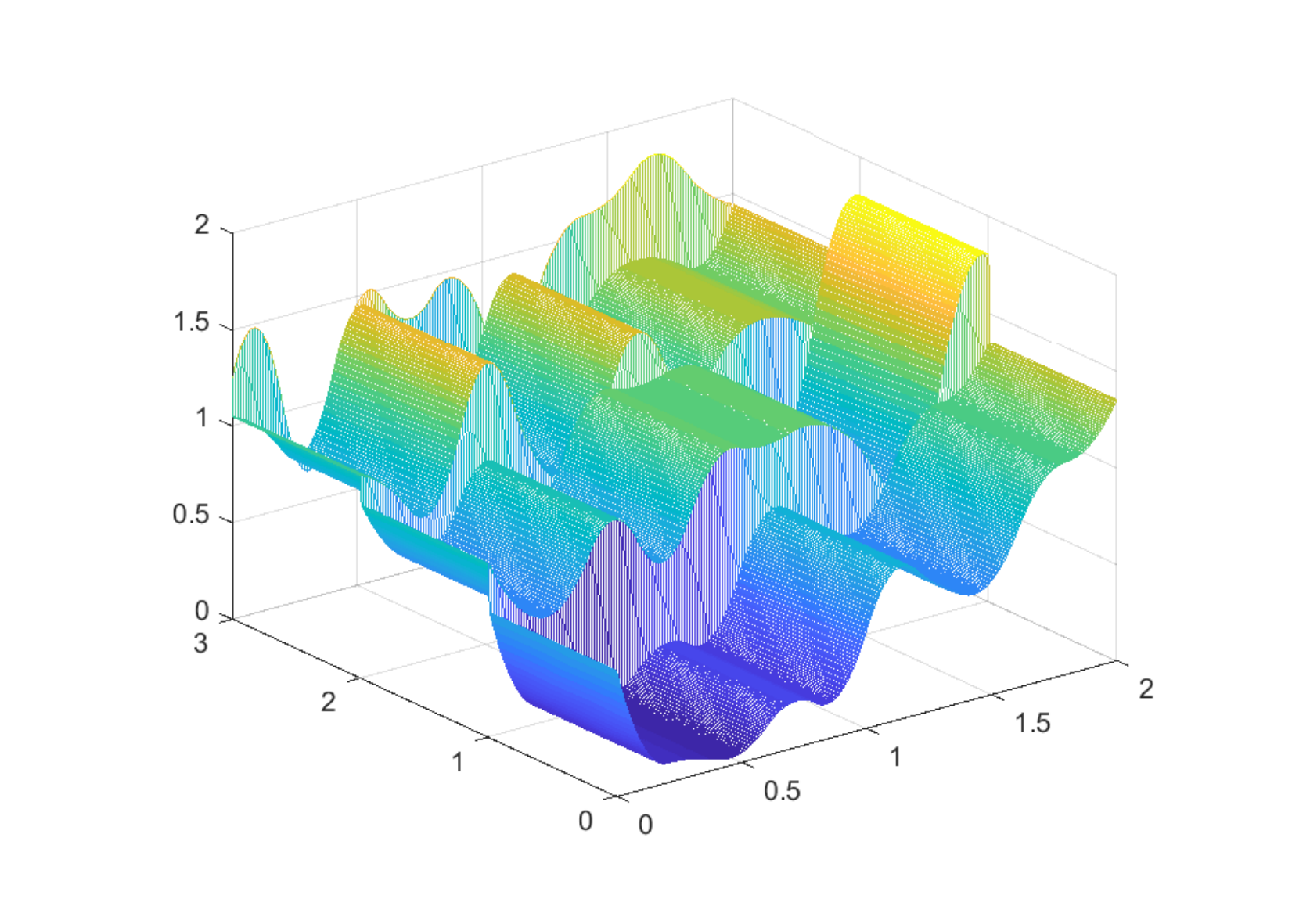}}
  \subfloat[Estimation 4: MSE = 0.0237]{\includegraphics[width=0.45\textwidth]{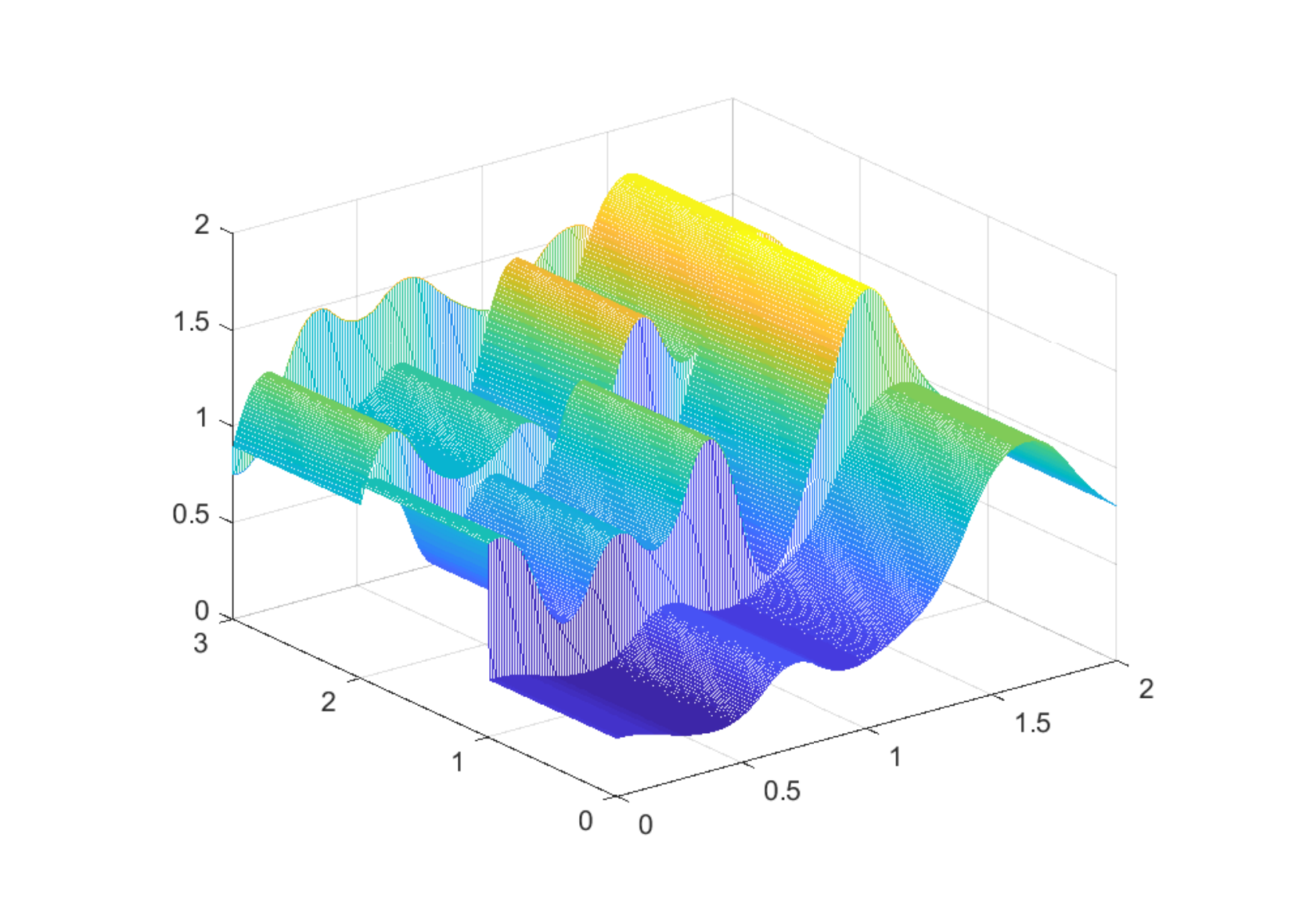}}
  \vspace{0.5cm}    
   \caption{Estimation with GA, $A_n = \{0,0.2,\ldots,1.8,2.0\}$, $n=1000$.}\label{fig:2}
\end{figure}

\newpage

\section{Proof of the main result}\label{sec:proof_main}

Before we proceed with the first auxiliary lemma, we fix the notation that we use throughout the rest of the proof.
\begin{defi}
\begin{enumerate}[label=(\roman*),wide,labelwidth=!,labelindent=0pt]
\item The true link function corresponding to the data generating process will be denoted by $m$, candidate functions in will typically be denoted by $g$ or $h$;
\item the conditional expectation of a random variable $X$ given that $\hat{m}_n = g$ is denoted by $\ew_{|\hat{m}_n = g}[X]$\,;
\item $\bm{Y}_k^l := (0,Y_k, \ldots, Y_l)$;
\item for any $g \in \cg$, $i \in \Zbb$ and $t \in \Nbb_+$, let $g^{[0]}(\lambda_i,Y_i) := g(\lambda_i,Y_i)$ and 
\[g^{[t]}(\lambda_{i-t},Y_{i-t},\ldots,Y_i) := g\big(g^{[t-1]}(\lambda_{i-t},Y_{i-t},\ldots,Y_{i-1}),Y_i\big)\,;\]
\item for any $g \in \cg$, $i \in \Zbb$, $t \in \Nbb$,
\begin{IEEEeqnarray*}{rRl}
f_t(g;\bm{Y}_{i-t}^{i+1}) 
&:=& 
\big( Y_{i+1} - m^{[t]}(\bm{Y}_{i-t}^i) \big)^2 - 
\big( Y_{i+1} - g^{[t]}(\bm{Y}_{i-t}^i) \big)^2 
 \\ 
&= &
\big( Y_{i+1} - m^{[t]}(0,Y_{i-t}, \ldots, Y_i) \big)^2 
 - \big( Y_{i+1} - g^{[t]}(0,Y_{i-t}, \ldots, Y_i) \big)^2 \,.
\end{IEEEeqnarray*}
\end{enumerate}
\end{defi}
\noindent
In the notation of part (ii), the loss can be written as a conditional expectation, $L(\hat{m}_n,m) = \ew_{|\hat{m}_n = g} \big[ m(\lambda'_0,Y'_0) - g(\lambda'_0,Y'_0)\big]^2$, where $(\lambda'_0,Y'_0) \sim \pi$ is independent of the original data generating process. 

For the proof of Theorem \ref{thm:main} we use some principles that are well known in the asymptotic analysis of least squares estimators for regression functions $r(x) := \ew_{|\xi_i = x}[\eta_i]$, $r \in \mathcal{R}$, with independent and identically distributed (i.i.d.) data pairs $(\eta_i,\xi_i) \sim P$ and some nonparametric class $\mathcal{R}$ of candidate functions. For the estimator $\hat{r}$ that minimizes the contrast functional $\varphi_n(g) = \frac{1}{n} \sum_{i=1}^n (\eta_i - g(\xi))^2$ over $\mathcal{R}$, the following central bound for the quadratic loss can be established:  
\begin{IEEEeqnarray}{rCl}\label{eq:risk_regression}
\int [r(x) - \hat{r}(x)]^2 \ P(dy,dx) 
& =   & \int [(y - \hat{r}(x))^2 - (y - r(x))^2] \ P(dy,dx) \\
&\leq & \big(\varphi_n(r) - \varphi_n(\hat{r})\big) - \int [(y - r(x))^2 - (y - \hat{r}(x))^2] \ P(dy,dx) \nonumber \\
&\leq & \sup_{g \in \mathcal{R}} \Big\{\big(\varphi_n(r) - \varphi_n(g)\big) - \ew \big(\varphi_n(r) - \varphi_n(g)\big) \Big\}\,.
\nonumber
\end{IEEEeqnarray}
A bound for the last term can be found by means of classical empirical process theory. 

We will adapt this line of argument to the nonparametric INGARCH(1,1) setting. However, due to the more complicated dependency structure of our data generating process, we will not be able to exploit as simple a relation as \eqref{eq:risk_regression}. As the univariate count process is easier to handle than the bivariate process, we seek as a starting point a relation similar to \eqref{eq:risk_regression} but adapted to the contrast functional $\Phi_n$ (cf. Definition \ref{defi:estimator}) and without the appearance of any $\lambda_i$. This is the subject of Lemma \ref{Lemma_1}. Subsequently, we discuss the dependency structure of the count process and prove that it is uniformly mixing, which leads to the coupling of Corollary \ref{lemma:2.3.3}. The resulting empirical process is then bounded in probability in Lemma \ref{lemma:chaining}, after which  the conclusion of the proof follows.

\begin{lemma}\label{Lemma_1}
Let $\delta = \delta(n) \asymp n^{-1/3} \log n$ 
and $t = t(n) = - \big\lceil\frac{2}{3 \log L_1} \log n \big\rceil$. There exists a positive constant $\gamma > 0$ such that for the set $\cg_k := \big\{ g \in \cg(B_n,M,L_1,L_2) \colon \ew[m^{[t]}(\bm{Y}_{0}^{t}) - g^{[t]}(\bm{Y}_{0}^{t})]^2 \leq 2^{2k+2} \gamma \delta^2 \big\}$, almost all $n \in \Nbb$, and $n \to \infty$
\begin{IEEEeqnarray*}{rCl}
\pr \Big\{ L(\hat{m}_n,m) > \delta^2 \Big\} & \leq  & \pr \bigcup_{k=0}^\infty \Bigg\{ \sup_{g \in \cg_k} 
 \frac{1}{n-t}\sum_{i=t}^{n-1}
 \big( f_t(g;\bm{Y}_{i-t}^{i+1}) -  \ew  f_t(g;\bm{Y}_{i-t}^{i+1}) \big)  > 2^{2k-2} \gamma \delta^2 \Bigg\} + o(1)\,.
\end{IEEEeqnarray*}
\end{lemma}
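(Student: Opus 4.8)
The plan is to follow the template of \eqref{eq:risk_regression}, but in two stages: first pass from the bivariate $L_2(\pi)$ loss to an $L_2$ quantity involving only the count process via the truncated iterate $m^{[t]}$, then peel off the stochastic term via a union bound over shells $\cg_k$. I would begin by conditioning on $\hat m_n = g$ and writing $L(\hat m_n,m) = \ew\big[m(\lambda'_0,Y'_0)-g(\lambda'_0,Y'_0)\big]^2$ as in part (ii) of the notation. The contraction bound \eqref{C2} gives $\big|\ew[Y_{i+1}\mid Y_0,\dots,Y_i] - m^{[i]}(\bm Y_0^i)\big| = O(L_1^i)$ and, more to the point for us, for fixed lag $t$ the law of $(Y_{i+1}, \bm Y_{i-t}^i)$ under stationarity lets one compare $\ew[m^{[t]}(\bm Y_{i-t}^i)-g^{[t]}(\bm Y_{i-t}^i)]^2$ with $L(g,m)$ up to an additive $O(L_1^{t})$ term (the influence of the unknown $\lambda_{i-t}$ decays like $L_1^{t}$, and with the stated choice $t = -\lceil \tfrac{2}{3\log L_1}\log n\rceil$ one has $L_1^{2t} \asymp n^{-4/3}$, which is $o(\delta^2)$). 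This is where the ``$+o(1)$'' in the statement will come from.

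**Main decomposition.** Once the loss is expressed (up to the negligible correction) through $\ew[m^{[t]}-g^{[t]}]^2$, I would run the standard least-squares inequality: by definition of $\hat m_n$ as a minimiser of $\Phi_n$ over $\cg_n^{\textit{cand}}$, and using \eqref{eq:cand} to replace $\cg_n^{\textit{cand}}$ by $\cg(B_n,M,L_1,L_2)$ with probability tending to one, we get $\Phi_n(\hat m_n) \le \Phi_n(m')$ for an appropriate near-minimiser $m'$ close to $m$; rearranging and using the $f_t$ notation,
\begin{IEEEeqnarray*}{rCl}
\frac{1}{n-t}\sum_{i=t}^{n-1}\ew f_t(\hat m_n;\bm Y_{i-t}^{i+1})
&\le& \frac{1}{n-t}\sum_{i=t}^{n-1}\big(f_t(\hat m_n;\bm Y_{i-t}^{i+1}) - \ew f_t(\hat m_n;\bm Y_{i-t}^{i+1})\big) + (\text{small}).
\end{IEEEeqnarray*}
The left side is, again up to the $t$-lag correction and a constant, comparable to $L(\hat m_n,m)$; so on the event $\{L(\hat m_n,m) > \delta^2\}$ the estimator lies in some shell $\cg_k$ with $2^{2k}\gamma\delta^2 < L(\hat m_n,m) \le 2^{2k+2}\gamma\delta^2$, and the displayed centred-sum must exceed a constant multiple of $2^{2k}\delta^2$ — precisely the threshold $2^{2k-2}\gamma\delta^2$ after absorbing constants into $\gamma$. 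Taking the union over $k\ge 0$ yields the claimed bound. I would use Jensen/Cauchy–Schwarz and the uniform bound $0\le g,m\le M$ (plus $0\le Y_i$ with the truncation at $B_n$ under \eqref{eq:cand}) to control all the ``small'' terms, and fix $\gamma$ at the end so that the shell radii and the threshold line up.

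**Expected obstacle.** The delicate point is the first stage: rigorously passing from $L(g,m)$, an integral against the bivariate stationary $\pi$ in $(\lambda,y)$, to $\ew[m^{[t]}(\bm Y_{0}^{t})-g^{[t]}(\bm Y_0^t)]^2$, which involves only counts and the fixed truncation depth $t$. One direction uses \eqref{C2} to replace the true (infinite-memory) $\lambda_0$ by $m^{[t]}(0,Y_{-t},\dots,Y_{-1})$ with error $L_1^{t}\lambda_{-t}$, controlled in $L_2$ by $L_1^t M$; but one must also ensure the reverse comparison holds so that membership in the data-defined shell $\cg_k$ genuinely captures the event $\{L(\hat m_n,m)>\delta^2\}$, and this requires keeping careful track of the Lipschitz constants $L_1,L_2$ through the $t$-fold composition (the map $g\mapsto g^{[t]}$ has Lipschitz constant $\le \max(L_1^t,\text{geometric sum in }L_1,L_2)$ in its arguments). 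The bookkeeping of these constants, together with checking that every discarded term is $o(\delta^2) = o(n^{-2/3}(\log n)^2)$ under the specified $\delta$ and $t$, is the technical heart; the empirical-process estimate for the centred sums is postponed to Lemma~\ref{lemma:chaining} and is not needed here.
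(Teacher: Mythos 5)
Your proposal follows essentially the same route as the paper's proof: relate the loss to $\ew\big[m^{[t]}(\bm{Y}_0^{t})-g^{[t]}(\bm{Y}_0^{t})\big]^2$ up to additive $O(M^2L_1^t)=o(\delta^2)$ terms via the contraction property, apply the basic least-squares inequality with a modification of $m$ lying in the candidate set (the paper takes $m_n(\lambda,y)=m(\lambda,y\wedge Y_{(n)})$, so that $\Phi_n(m_n)=\Phi_n(m)$ exactly), use \eqref{eq:cand} to pass to $\cg(B_n,M,L_1,L_2)$ at an $o(1)$ cost, peel the event into the shells $\cg_k$, and finally trade the varying-lag sums $\frac{1}{n}\sum_i f_i(g;\bm{Y}_0^{i+1})$ for the fixed-lag sums $\frac{1}{n-t}\sum_i f_t(g;\bm{Y}_{i-t}^{i+1})$ at the price of a remainder controlled by $\ew\Delta_n\lesssim \frac{t}{n}+L_1^t$ and Markov's inequality. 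The two comparison inequalities you single out as the technical heart --- in particular the lower bound $\ew\big[m^{[t]}-g^{[t]}\big]^2\geq \frac{(1-\varepsilon)^2}{12}L(\hat m_n,m)-2M^2L_1^t$, which requires more than Lipschitz bookkeeping of the composition --- are precisely the steps the paper itself does not prove in place but imports from Wechsung (2019), so your sketch matches the paper's argument at the paper's own level of detail.
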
 

Note that the stochastic process $\big\{\bm{Y}_{i-t}^{i+1}\big\}_{i \in \Zbb}$ is not an i.i.d. sequence. In order to bound the random functional $g \mapsto \frac{1}{n-t}\sum_{i=t}^{n-1}
 \big( f_t(g;\bm{Y}_{i-t}^{i+1}) -  \ew  f_t(g;\bm{Y}_{i-t}^{i+1}) \big)$ in probability uniformly over the set $\cg_k$, we want to use tools from the theory of empirical processes. These tools require the degree of dependence in the data generating process to be somewhat negligible. We will show that this is indeed the case because the count process is uniformly mixing.

\begin{defi} 
A stationary sequence $\{\eta_i\}_{i \in \Zbb}$ of $(\Rbb,\cb)$-valued random variables on $(\Omega,\cf,\pr)$
is called uniformly ($\phi-$)mixing if the sequence of mixing coefficients $\{\phi(k)\}_{k \in \Nbb}$\,,
\begin{IEEEeqnarray*}{rCl}
\phi(k) := \esssup \Big\{ \sup_{B \in \cb^\infty} \big| \pr\big\{ (\eta_k,\eta_{k+1}, \ldots) \in B \, \big| \, \eta_0, \eta_{-1}, \ldots \big\} - \pr\big\{ (\eta_k,\eta_{k+1}, \ldots) \in B \big\} \big| \Big\}\,,
\end{IEEEeqnarray*}
converges to zero as $k \to \infty$.
\end{defi}

It is well-known that uniform mixing implies absolute regularity ($\beta$-mixing), where the
corresponding coefficients are defined by
\begin{IEEEeqnarray*}{rCl}
\beta(k) := \ew \Big\{ \sup_{B \in \cb^\infty} \big| \pr\big\{ (\eta_k,\eta_{k+1}, \ldots) \in B \, \big| \, \eta_0, \eta_{-1}, \ldots \big\} - \pr\big\{ (\eta_k,\eta_{k+1}, \ldots) \in B \big\} \big| \Big\}
\end{IEEEeqnarray*}
\citep{Doukhan94}.
The most striking feature of a (uniformly) mixing sequence $\{\eta_i\}_{i \in \Zbb}$ is that $\eta_0$ and $\eta_k$ are almost independent if $k$ is large enough. Due to the stationarity of the process, the same is true for any pair $\eta_{i + k}$, $\eta_i$ with $i \in \Zbb$. The sense in which this almost independence materializes is specified by the next lemma which was used by \cite{Doukhan94} to prove exponential inequalities for absolute regular sequences. It is based on the classic coupling lemma by \cite{Berbee1979}. 

\begin{lemma}
Let $\{\eta_i\}_{ i \in \Zbb}$ be a uniformly mixing random sequence on $(\Omega,\cf,\pr)$, with mixing coefficients $\{\phi(k)\}_{k \in \Nbb}$. For $q \in \Nbb_+$, there exist two random sequences $\{\eta_i'\}_{i \in \Zbb}$ and $\{\eta_i^*\}_{i \in \Nbb}$ on a probability space $(S,\Sigma,P)$ such that 
\begin{enumerate}[label=(\roman*),wide,labelwidth=!,labelindent=0pt]
\item the processes $\{\eta_i\}$ and $\{\eta_i'\}$ have the same distribution;
\item the process $\{\eta_i^*\}$ is $q$-dependent, i.e. the block sequences $\big\{(\eta^*_{2jq},\ldots,\eta^*_{(2j+1)q-1})\colon j \in \Nbb\big\}$ and
$\big\{(\eta^*_{(2j+1)q},\ldots,\eta^*_{(2j+2)q-1})\colon j \in \Nbb\big\}$ are i.i.d., respectively;
\item $P^{\big(\eta'_{jq},\ldots,\eta'_{(j+1)q-1}\big)} = P^{\big(\eta^*_{jq},\ldots,\eta^*_{(j+1)q-1}\big)}$ for any $j \in \Nbb$\,;
\item $P\big\{\eta'_i \neq \eta^*_i, \text{ for some } i \in \{0,\ldots,n-1\}\big\} \leq \frac{n}{q}\phi(q)$ for any $n \in \Nbb_+$\,.
\end{enumerate}
\end{lemma}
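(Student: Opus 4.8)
The plan is to deduce this from Berbee's coupling lemma applied blockwise. First I would cut $\{\eta_i\}$ into consecutive blocks of length $q$, setting $U_j := (\eta_{jq},\dots,\eta_{(j+1)q-1})$ for $j \in \Nbb$, and then split these into the even blocks $V_j := U_{2j}$ and the odd blocks $W_j := U_{2j+1}$. The crucial point of this even/odd decomposition is that two consecutive blocks of the even subsequence, $V_{j-1}$ and $V_j$, are separated by the whole block $U_{2j-1}$, so the $\sigma$-fields $\sigma(V_0,\dots,V_{j-1})$ and $\sigma(V_j)$ are separated by $q+1$ time steps; by stationarity and $\beta(k)\le\phi(k)$ their absolute regularity coefficient is therefore at most $\phi(q+1)\le\phi(q)$ (using that $k\mapsto\phi(k)$ is nonincreasing), and likewise for the odd subsequence. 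Without the splitting, consecutive blocks would be adjacent and only the useless bound $\phi(1)$ would be available.

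The main tool is the iterated version of Berbee's lemma: given random elements $X_1,X_2,\dots$ with values in Polish spaces on $(\Omega,\cf,\pr)$, after enlarging the space by an independent sequence of uniforms there exist \emph{independent} $X_1^*,X_2^*,\dots$ with $X_j^* \overset{d}{=} X_j$ for every $j$ and, for every $m$,
\[
\pr\{X_j \neq X_j^* \text{ for some } j \le m\} \;\le\; \sum_{j=2}^{m} \beta\big(\sigma(X_1,\dots,X_{j-1}),\,\sigma(X_j)\big).
\]
This follows by induction on $m$ from the two-component coupling lemma of \cite{Berbee1979}: at step $j$ one replaces $X_j$ by a version that is independent of $(X_1,\dots,X_{j-1})$ at a cost (in total variation of the coupling) equal to $\beta(\sigma(X_1,\dots,X_{j-1}),\sigma(X_j))$, everything being carried on a single space by a measurable disintegration.

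With this in hand the construction is immediate. On an enlarged space I would first realize a copy $\{\eta_i'\}_{i \in \Zbb}$ of $\{\eta_i\}$, giving (i); then apply the iterated coupling to its even blocks $(V_j')_{j \ge 0}$ to obtain an i.i.d.\ sequence $(V_j^*)_{j\ge 0}$ with $V_j^* \overset{d}{=} V_j'$, and, using fresh independent uniforms, apply it again to the odd blocks $(W_j')_{j \ge 0}$ to obtain an i.i.d.\ sequence $(W_j^*)_{j\ge 0}$; finally I would define $\{\eta_i^*\}_{i\in\Nbb}$ by interleaving the $V_j^*$ and $W_j^*$. Property (ii) is exactly this block structure (which is the meaning of ``$q$-dependent'' here), and (iii) holds because each block of $\{\eta_i^*\}$ has, by construction, the same law as the corresponding block of $\{\eta_i'\}$, namely the stationary law of $U_0$. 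For (iv), the event $\{\eta_i' \neq \eta_i^* \text{ for some } i < n\}$ is contained in the union of the two events ``some even (resp.\ odd) block meeting $\{0,\dots,n-1\}$ was altered''; since the coupling is performed once for the whole block sequence, the displayed bound gives for each of these events a sum of at most $\lceil n/q\rceil/2 + O(1)$ terms, each at most $\phi(q+1)\le\phi(q)$, and adding the two contributions yields the bound $\frac{n}{q}\phi(q)$ after the routine block count, simultaneously for all $n$.

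The part demanding real care — the rest being bookkeeping — is the execution of the iterated coupling on one probability space: one must arrange the $X_j^*$ to be \emph{jointly} (not just pairwise) independent with exactly the prescribed marginals while retaining control of $\pr\{X_j \neq X_j^*\}$, which requires a careful measurable selection/disintegration at each step of the induction. Matching the relevant $\sigma$-field gaps to $\phi(q)$, as noted above, is precisely what dictates the even/odd decomposition of the blocks.
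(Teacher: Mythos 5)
Your construction is exactly the standard one the paper relies on: the lemma is stated without proof and attributed to \cite{Berbee1979} and \cite{Doukhan94}, and that argument is precisely the iterated Berbee coupling applied separately to the even and odd length-$q$ blocks, using the $(q+1)$-step gap inside each parity class and $\beta(q+1)\leq\phi(q+1)\leq\phi(q)$. Your proposal is correct (the block count in (iv) works out because the sum in your iterated-coupling inequality starts at $j=2$, so the first block of each parity class is cost-free), so it matches the paper's (cited) approach.
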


The count process $\{Y_t\}_{t \in \Zbb}$ is uniformly mixing because the contraction property (C) renders neglectable the difference between the conditional distribution $\pr^{(Y_k,Y_{k+1}, \ldots) \,|\, Y_0}$ and the unconditional distribution $\pr^{(Y_k,Y_{k+1}, \ldots)}$ for large $k$. This property carries over to the process $\{\bm{Y}_{i-t}^{i+1}\}_{i \in \Zbb}$. We state the formal result without a proof, the interested reader can find a detailed proof in 
\cite{Wechsung2019}.
Mixing properties of a general class of GARCH-type processes including our data generating process are discussed by
\cite{Doukhan2017}.

\begin{lemma}\label{lemma:mixing_lagg}
The count process $\{Y_t\}_{t \in \Zbb}$ is uniformly mixing with mixing coefficients $\phi(k) \lesssim (L_1 + L_2)^k$. For any $t \in \Nbb$, the process $\{\bm{Y}_{i-t}^{i+1}\}_{n\in\Zbb}$ is uniformly mixing, and the corresponding mixing coefficients $\phi^{(t)}(k)$ are geometrically decreasing, $\phi^{(t)}(k) \lesssim (L_1+L_2)^{k-t}$.
\end{lemma}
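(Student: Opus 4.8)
The plan is to deduce the uniform mixing of $\{Y_t\}$ from the contraction property (C) by first reducing the mixing coefficient to a total variation distance between conditional laws of the future and then bounding that distance by an explicit coupling. By \eqref{C2} one has $\cf_0 = \sigma\{Y_s : s \le 0\}$, so conditioning on $Y_0, Y_{-1},\ldots$ is the same as conditioning on $\cf_0$; since $\{(\lambda_t,Y_t)\}$ is a Markov chain and $\lambda_1 = m(\lambda_0,Y_0)$ is $\cf_0$-measurable, the conditional law of $(Y_k,Y_{k+1},\ldots)$ given $\cf_0$ is a measurable function $\Psi(\lambda_1)$ of $\lambda_1$ alone, with $\lambda_1 \in [0,M]$ almost surely because $m$ takes values in $[0,M]$. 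Writing the stationary law of $(Y_k,Y_{k+1},\ldots)$ as the mixture $\int\Psi(\ell')\,\nu(d\ell')$ over the stationary marginal $\nu$ of $\lambda_1$ (also supported on $[0,M]$) and using convexity of the total variation norm, we obtain
\[
\phi(k) \ \le\ \sup_{\ell,\ell'\in[0,M]} \big\| \mathcal L\big((Y_k,Y_{k+1},\ldots)\mid\lambda_1=\ell\big) - \mathcal L\big((Y_k,Y_{k+1},\ldots)\mid\lambda_1=\ell'\big) \big\|_{TV}.
\]
It is here that the boundedness of the range of the intensity process enters; this is what upgrades the absolute regularity already known for this class of processes to \emph{uniform} mixing.

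For fixed $\ell,\ell'\in[0,M]$ I would construct a coupling $\{(Y^{(1)}_t,Y^{(2)}_t)\}_{t\ge1}$ of the two INGARCH(1,1) recursions started from $\lambda^{(1)}_1=\ell$ and $\lambda^{(2)}_1=\ell'$: at each step $t$, given $\lambda^{(1)}_t$ and $\lambda^{(2)}_t$, realise the two conditional Poisson variables as a common $\mathrm{Poiss}(\lambda^{(1)}_t\wedge\lambda^{(2)}_t)$ part plus two independent Poisson increments with means $\lambda^{(1)}_t-\lambda^{(1)}_t\wedge\lambda^{(2)}_t$ and $\lambda^{(2)}_t-\lambda^{(1)}_t\wedge\lambda^{(2)}_t$, and then propagate $\lambda^{(i)}_{t+1}=m(\lambda^{(i)}_t,Y^{(i)}_t)$. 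This coupling has the two elementary properties $\ew\big[\,|Y^{(1)}_t-Y^{(2)}_t|\;\big|\;\lambda^{(1)}_t,\lambda^{(2)}_t\,\big]=|\lambda^{(1)}_t-\lambda^{(2)}_t|$ and $\pr\big(Y^{(1)}_t\ne Y^{(2)}_t\;\big|\;\lambda^{(1)}_t,\lambda^{(2)}_t\big)\le|\lambda^{(1)}_t-\lambda^{(2)}_t|$. Combining the first with (C) shows that $D_t:=\ew|\lambda^{(1)}_t-\lambda^{(2)}_t|$ satisfies $D_{t+1}\le L_1D_t+L_2D_t=(L_1+L_2)D_t$, hence $D_t\le(L_1+L_2)^{t-1}M$; the second then gives $\pr(Y^{(1)}_t\ne Y^{(2)}_t)\le D_t$, so that a union bound over $t\ge k$ yields $\pr\big(Y^{(1)}_t=Y^{(2)}_t\text{ for all }t\ge k\big)\ge 1-M(L_1+L_2)^{k-1}/(1-L_1-L_2)$. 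Since the marginal laws of $(Y^{(1)}_t)_{t\ge k}$ and $(Y^{(2)}_t)_{t\ge k}$ are precisely the two conditional laws appearing in the display, the total variation distance there is $\lesssim(L_1+L_2)^k$, which proves the first assertion.

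For the lagged process, observe that the conditioning field $\sigma\{\bm{Y}_{j-t}^{j+1}:j\le0\}$ coincides with $\sigma\{Y_s:s\le 1\}$, while $(\bm{Y}_{j-t}^{j+1})_{j\ge k}$ is measurable with respect to $\sigma\{Y_s:s\ge k-t\}$. Hence, by stationarity, $\phi^{(t)}(k)\le\phi(k-t-1)$ whenever $k\ge t+2$, and $\phi^{(t)}(k)\le1$ otherwise; in either case $\phi^{(t)}(k)\lesssim(L_1+L_2)^{k-t}$, as claimed.

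The main obstacle is the coupling step of the second paragraph. The point is that agreement of $Y^{(1)}_t$ and $Y^{(2)}_t$ at a single time does \emph{not} force $\lambda^{(1)}_{t+1}=\lambda^{(2)}_{t+1}$, so the two chains need not merge and one cannot reduce matters to a single coupling time; instead one must keep the intensity discrepancy geometrically small at \emph{every} time and sum the per-step disagreement probabilities. Choosing the ``common part plus independent increment'' coupling rather than a maximal coupling is exactly what makes the contraction $D_{t+1}\le(L_1+L_2)D_t$ come out cleanly, and the elementary Poisson identities it uses, together with the measurability reduction of the first paragraph, are routine once \eqref{C2} is available.
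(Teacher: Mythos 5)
Your proof is correct, and it follows essentially the route the paper relies on: the paper itself omits the argument, deferring to \cite{Wechsung2019} (and the coupling ideas of \cite{Neumann2011, Doukhan2017}), where uniform mixing is obtained in just this way — reduce $\phi(k)$ to a total-variation distance between futures started from two intensities $\ell,\ell'\in[0,M]$ (the boundedness of the intensity is what upgrades absolute regularity to uniform mixing), couple the two chains via the common-part-plus-independent-increment Poisson construction so that $\ew|\lambda^{(1)}_t-\lambda^{(2)}_t|$ contracts by the factor $L_1+L_2$, and sum the per-step disagreement probabilities rather than waiting for a merge time. Your treatment of the lagged process, identifying $\sigma\{\bm{Y}_{j-t}^{j+1}\colon j\le 0\}$ with $\sigma\{Y_s\colon s\le 1\}$ and absorbing the shift into the factor $(L_1+L_2)^{-t}$, is likewise the intended reduction, so no gap remains.
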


\begin{folg}\label{lemma:2.3.3}
Let $t \in \Nbb$ be chosen as in Lemma \ref{Lemma_1}. For $q \in \Nbb_+$, there exist two sequences of $\Rbb^{t+3}$-valued random vectors, $V' = \{V'_i\}_{i \in \Zbb}$ and $V^* = \{V^*_{t+i}\}_{i \in \Nbb}$\,, on a probability space $(S, \Sigma, P)$ such that: $P^{V'} = \pr^{\{\bm{Y}_{i-t}^{i+1}\}}$, the sequence $V^*$ is $q$-dependent, $P^{(V'_{jq},\ldots,V'_{(j+1)q-1})} = P^{(V^*_{jq},\ldots,V^*_{(j+1)q-1})}$, and 
\begin{IEEEeqnarray*}{rCL}
P\big\{ V_i' \neq V_i^* \text{ for some } i \in \{t, \ldots, n-1\} \big\} \leq \frac{n-t}{q} \phi^{t}(q)\,.
\end{IEEEeqnarray*}
\end{folg}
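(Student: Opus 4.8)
The plan is to read the statement off the coupling lemma stated just above, applied to the lagged process $\{\bm{Y}_{i-t}^{i+1}\}_{i\in\Zbb}$; the work is essentially bookkeeping. First I would note that $\bm{Y}_{i-t}^{i+1} = (0,Y_{i-t},\ldots,Y_{i+1})$ is an $\Rbb^{t+3}$-valued random vector, that $\{\bm{Y}_{i-t}^{i+1}\}_{i\in\Zbb}$ is stationary (it is a fixed measurable image of the stationary count process), and that, by Lemma \ref{lemma:mixing_lagg}, it is uniformly mixing with coefficients $\phi^{(t)}(k)\lesssim(L_1+L_2)^{k-t}$.

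The coupling lemma above is phrased for $(\Rbb,\cb)$-valued sequences, whereas our entries live in $\Rbb^{t+3}$. Since that lemma rests on the Berbee coupling, which is valid on any Polish space, it applies verbatim to $\Rbb^{t+3}$-valued sequences; equivalently, one may fix a Borel isomorphism $\Rbb^{t+3}\to\Rbb$, run the scalar version, and transport back. I would make this remark explicit but not dwell on it.

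To place the coupling-failure bound on the index window $\{t,\ldots,n-1\}$ that is used later (cf. Lemma \ref{Lemma_1}), I would first shift time: set $Z_j := \bm{Y}_{j}^{j+t+1}$ for $j\in\Zbb$, so $Z_j$ coincides with $\bm{Y}_{(t+j)-t}^{(t+j)+1}$, and $\{Z_j\}_{j\in\Zbb}$ is again stationary and uniformly mixing with coefficients $\phi^{(t)}$. Applying the coupling lemma to $\{Z_j\}$ with block length $q$ and horizon $N := n-t$ (admissible for all large $n$, since $t\asymp\log n<n$) produces sequences $\{Z_j'\}_{j\in\Zbb}$ and $\{Z_j^*\}_{j\in\Nbb}$ on a probability space $(S,\Sigma,P)$ with $P^{\{Z_j'\}}=\pr^{\{Z_j\}}$, with $\{Z_j^*\}$ $q$-dependent, with $P^{(Z'_{jq},\ldots,Z'_{(j+1)q-1})}=P^{(Z^*_{jq},\ldots,Z^*_{(j+1)q-1})}$ for every $j\in\Nbb$, and with $P\{Z_j'\neq Z_j^*\text{ for some }j\in\{0,\ldots,n-t-1\}\}\leq\frac{n-t}{q}\phi^{(t)}(q)$.

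Finally I would undo the shift by relabelling, $V_i':=Z'_{i-t}$ for $i\in\Zbb$ and $V_i^*:=Z^*_{i-t}$ for $i\geq t$ (that is, $\{V^*_{t+i}\}_{i\in\Nbb}$). Then $V'$ is $\Rbb^{t+3}$-valued with $P^{V'}=\pr^{\{\bm{Y}_{i-t}^{i+1}\}}$, $V^*$ is $q$-dependent, the block-wise distributional identity holds along the grid $\{t+jq\}_{j\in\Nbb}$ --- which can be aligned with $\{jq\}_{j\in\Nbb}$ by re-indexing the blocks --- and the event $\{V_i'\neq V_i^*\text{ for some }i\in\{t,\ldots,n-1\}\}$ is exactly $\{Z_j'\neq Z_j^*\text{ for some }j\in\{0,\ldots,n-t-1\}\}$, so its $P$-probability is at most $\frac{n-t}{q}\phi^{(t)}(q)$, as claimed. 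The only steps requiring any care are this index shift --- making sure properties (iii) and (iv) come out anchored at the indices used in the sequel --- together with the routine passage from the scalar coupling lemma to its $\Rbb^{t+3}$-valued form; neither is a genuine obstacle.
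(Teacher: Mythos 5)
Your proposal is correct and follows exactly the route the paper intends: the corollary is stated as an immediate consequence of the Berbee-type coupling lemma applied to the stationary, uniformly mixing lagged process $\{\bm{Y}_{i-t}^{i+1}\}$ of Lemma \ref{lemma:mixing_lagg}, and your extra bookkeeping (the extension of the scalar coupling lemma to $\Rbb^{t+3}$-valued sequences via a Borel isomorphism, and the index shift anchoring the failure bound at the window $\{t,\ldots,n-1\}$) is precisely the routine detail the paper leaves implicit.
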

Note that for any $i$, the first component of $V_i^*$ can be non-zero only on a $P$-null set. Since such a set is neglectable for the further argument, we stipulate without loss of generality that $V_i^* = (0,Y^*_{i-t}, \ldots, Y^*_{i+1})$ with $P^{Y^*_i} = \pr^{Y_i}$. At this point, we have to introduce some further notation.

\begin{defi}
\begin{enumerate}[label=(\roman*),wide,labelwidth=!,labelindent=0pt]
\item For any $i \in \Zbb$, the random variable $Y_i^*$ on $(S,\Sigma,P)$ is defined as the last coordinate of $V_{i-1}^*$; the random vector $Z_i^*$ is defined as $Z_i^* := (0,Y_{i-t}^*, \ldots, Y_i^*)$.
\item The expectation with respect to $P$ is denoted by $E$.
\item $\lambda_{i}^* := E[Y_i^* \, | \,Y_{i-1}^*,Y^*_{i-2},\ldots ]$\,.
\item For functions $g\in \cg$ and some length $q$, we introduce
\begin{IEEEeqnarray}{rCl}
N 
 := 
\begin{cases}
\frac{1}{2} \left \lfloor \frac{n-t}{q} \right \rfloor \qquad &\text{ if } \left \lfloor \frac{n-t}{q} \right \rfloor \text{ is even,} \\
 \frac{1}{2} \left( \left \lfloor \frac{n-t}{q} \right \rfloor - 1 \right)  \qquad &\text{ if } \left \lfloor \frac{n-t}{q} \right \rfloor \text{ is odd.}
\end{cases} \\ 
X^*_{r}(g) 
 :=  \frac{1}{q} \sum_{i=0}^{q-1} \big( f_t(g;V^*_{t+rq+i})- E \, f_t(g;V^*_{t+rq+i}) \big) 
 \\ 
R_n(g) 
 :=  \frac{1}{n-t} \sum_{i=2Nq}^{n-1-t} \big( f_t(g;V_{t+i}^*) - E f_t(g;V_{t+i}^*) \big)\,.
\end{IEEEeqnarray}
\end{enumerate}
\end{defi}

\begin{rem}\label{Remark1}
The coupling lemma implies that $X^*_{r}(g)$ and $X^*_{r+2}(g)$ are independent for any $r \in \Nbb$ and $g \in \cg$. Using the equality 
\begin{IEEEeqnarray}{rCl}
\frac{1}{n-t} \sum_{i=t}^{n-1}\big( f_t(g;V^*_{i}) - E f_t(g;V_{i}^*) \big)  
& = & \frac{q}{n-t} \sum_{j=0}^{N-1}  X^*_{2j}(g)  + \frac{q}{n-t} \sum_{j=0}^{N-1}  X^*_{2j+1}(g)  + R_n(g)
\label{eq:blocks}
\end{IEEEeqnarray}
and the triangle inequality for probabilities, we obtain on the basis of Lemma \ref{Lemma_1} (ii) and Corollary \ref{lemma:2.3.3}, for almost all $n \in \Nbb$
\begin{IEEEeqnarray}{rCl}
 \pr \Big\{ L(\hat{m}_n,m) > \delta^2  \Big\}  
&  \leq & \sum_{k=0}^\infty 2 \ P \Bigg\{ \sup_{g \in \cg_k} \frac{q}{n-t} \sum_{j=0}^{N-1}  X^*_{2j}(g) > 2^{2k-4} \gamma \delta^2 \Bigg\} \label{eq:empirical_1} \\
&    & + \sum_{k=0}^\infty  P \Bigg\{ \sup_{g \in \cg_k} R_n(g) > 2^{2k-3} \gamma \delta^2 \Bigg\}  + \frac{n-t}{q} \phi^t(q) + o(1)
\label{eq:Rn} 
\end{IEEEeqnarray} 
as $n \to \infty$. 
\end{rem}

The functional in line \eqref{eq:empirical_1}, $g^{[t]} \mapsto \frac{q}{n-t}  \sum_{j=0}^{N-1} X^*_{2j}(g)$, can be viewed as the trajectory of an empirical process driven by i.i.d. random variables, indexed by the function class $\{g^{[t]}\colon g \in \cg_k\}$. A standard tool to find uniform bounds for these trajectories is the so called chaining technique 
\citep{vandegeer1990, vandervaart1996, gyorfi2002, gine2015}. It is based on uniform bounds for the trajectories over successively refined finite approximations of the index set. Therefore, estimating the size of the index set in terms of covering numbers is of central importance. Since the functions $g^{[t]}$ have a high dimensional domain, which induces large covering numbers for the respective function class, it is clearly desirable to have a link to the considerably less complex class $\cg_k$. In this context, the usual strategy, considering covering numbers with respect to the $L_2$ norm, is infeasible since it is unclear to us how the $L_2(P^{(\lambda_i^*, Y_i^*)})$ norm of a function $g \in \cg$ and the $L_2(P^{Z_i^*})$ norm of the $t$-fold iterated function $g^{[t]}$ are related. However, quite easily we can find a relation between the respective $\|\cdot\|_\infty$-norms. This is done in Lemma \ref{lemma:covering} \ref{lemma:f_g}, which will also enable us to bound the remainder term in line \eqref{eq:Rn}. Part \ref{prop:g_s}, in turn, provides a bound for the $\|\cdot\|_\infty$-covering number of the index set $\cg_k$.

\begin{lemma}\label{lemma:covering}
\begin{enumerate}[label=(\roman*),wide,labelwidth=!,labelindent=0pt]
\item \label{lemma:f_g} For any $g,h \in \cg(M,L_1,L_2)$, 
\begin{IEEEeqnarray*}{rCL}
\big| f_t(g;V_i^*) - f_t(h;V_i^*) \big| \leq 
\frac{2 (Y_{i+1}^* + M)}{1-L_1} \, \|g-h\|_\infty\,. 
\end{IEEEeqnarray*} 
\item \label{prop:g_s}
For any $k,s \in \Nbb$, there exists a set $\cg_k^{(s)} \subset \cg_k$ with at most $e^{2MB_n2^{s-k}/(\sqrt{\gamma}\delta)}$ elements and a selection function $\pi_{s,k} \colon \cg_k \to \cg_k^{(s)}$ such that
$\|g - \pi_{s,k}g\|_\infty \leq 2^{-s} 2^{k+1} \sqrt{\gamma} \delta$ for any $g \in \cg_k$. We stipulate the notation $\pi_{s,k} g =: g_{s,k}$.
\end{enumerate}
\end{lemma}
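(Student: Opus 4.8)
The plan is to treat the two assertions separately; the first is a short computation, the second an entropy estimate.

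\emph{Part \ref{lemma:f_g}.} I would start from
\[
f_t(g;V_i^*) - f_t(h;V_i^*) = \bigl(Y_{i+1}^* - h^{[t]}(Z_i^*)\bigr)^2 - \bigl(Y_{i+1}^* - g^{[t]}(Z_i^*)\bigr)^2
\]
and factor this difference of squares as the product of $g^{[t]}(Z_i^*) - h^{[t]}(Z_i^*)$ and $2Y_{i+1}^* - g^{[t]}(Z_i^*) - h^{[t]}(Z_i^*)$. Since $g^{[t]}$ and $h^{[t]}$ are $[0,M]$-valued, the second factor has modulus at most $2(Y_{i+1}^* + M)$. For the first factor I would establish the $t$-uniform bound $\|g^{[t]} - h^{[t]}\|_\infty \le (1-L_1)^{-1}\|g-h\|_\infty$ by induction on $t$: with $d_j := \|g^{[j]} - h^{[j]}\|_\infty$, the recursive definition of the iterates together with (C)---applicable at each step because all iterates stay in $[0,M]$---gives $d_j \le L_1 d_{j-1} + \|g-h\|_\infty$, and summing the geometric series yields $d_t \le \frac{1 - L_1^{t+1}}{1-L_1}\|g-h\|_\infty \le \frac{\|g-h\|_\infty}{1-L_1}$. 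Multiplying the two estimates gives the assertion.

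\emph{Part \ref{prop:g_s}.} Put $\epsilon := 2^{-s}2^{k+1}\sqrt{\gamma}\,\delta$, so that the claimed cardinality equals $\exp(4MB_n/\epsilon)$ (using $1/\epsilon = 2^{s-k-1}/(\sqrt{\gamma}\delta)$) and the claimed covering radius equals $\epsilon$; it therefore suffices to exhibit an $\epsilon$-net of $\cg_k$ lying in $\cg_k$ of size at most $\exp(4MB_n/\epsilon)$. Every $g \in \cg_k \subset \cg(B_n,M,L_1,L_2)$ is determined by its $B_n+1$ sections $\lambda \mapsto g(\lambda,y)$, $y \in \{0,\dots,B_n\}$, each mapping $[0,M]$ into $[0,M]$ with Lipschitz constant $L_1$, and $\|g-h\|_\infty = \max_{0\le y \le B_n}\sup_{\lambda\in[0,M]}|g(\lambda,y)-h(\lambda,y)|$ for two such functions. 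I would discretise $[0,M]$ on a grid of mesh proportional to $\epsilon$ and round the section values to a lattice of spacing proportional to $\epsilon$; because $L_1<1$, the increment of a section over one mesh interval is strictly below the mesh, so after rounding the increments take only three values, whence each section lies within $\epsilon/2$ of one of at most $\exp(O(M/\epsilon))$ piecewise-linear (or staircase) functions. Forming the product over $y \in \{0,\dots,B_n\}$ produces an external $(\epsilon/2)$-net $\mathcal{N}$ of $\cg_k$ with $|\mathcal{N}| \le \exp(O(MB_n/\epsilon))$, which for all large $n$ is at most $\exp(4MB_n/\epsilon)$ (here $B_n\to\infty$ absorbs the lower-order and logarithmic terms, and the numerical constant $\log 3 < 2$). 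Finally I would internalise the net in the standard way: for each $\nu \in \mathcal{N}$ whose closed $(\epsilon/2)$-ball meets $\cg_k$, pick some $g_\nu \in \cg_k$ with $\|\nu - g_\nu\|_\infty \le \epsilon/2$, set $\cg_k^{(s)} := \{g_\nu\}$ (so $|\cg_k^{(s)}| \le |\mathcal{N}|$), and for $g\in\cg_k$ define $\pi_{s,k}g := g_{\nu}$ for some $\nu$ with $\|g-\nu\|_\infty\le\epsilon/2$; the triangle inequality then gives $\|g - \pi_{s,k}g\|_\infty \le \epsilon = 2^{-s}2^{k+1}\sqrt{\gamma}\,\delta$.

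The computation in \ref{lemma:f_g} is routine, the only thing to watch being that the iterates remain in $[0,M]$ so that (C) may be invoked at every step, and that the geometric sum makes the bound independent of $t$, which is what the chaining argument in Lemma \ref{lemma:chaining} needs. The substantive step is the entropy bound \ref{prop:g_s}. The delicate points there are to choose the discretisation so that it genuinely $(\epsilon/2)$-covers the contractive sections---this is precisely where the contraction hypothesis $L_1<1$ enters, forcing the between-grid increments below the mesh so that the increment alphabet, and hence the cardinality, stays bounded---and then to replace the external net by one inside $\cg_k$ without more than doubling the radius. The gap between the $\log 3\cdot MB_n/\epsilon$ delivered by the naive count and the $4MB_n/\epsilon$ that is claimed leaves ample room for this doubling and for the lower-order corrections.
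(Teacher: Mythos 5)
Your proof is correct and follows essentially the same strategy as the paper's. Part \ref{lemma:f_g} is identical: factor the difference of squares, bound the factor $|2Y_{i+1}^*-g^{[t]}-h^{[t]}|$ by $2(Y_{i+1}^*+M)$ using boundedness by $M$, and control $\|g^{[t]}-h^{[t]}\|_\infty$ through the recursion $d_j\le L_1 d_{j-1}+\|g-h\|_\infty$ and the geometric series, exactly as in the paper. For Part \ref{prop:g_s} the skeleton also coincides: an entropy bound of order $MB_n/\epsilon$ for the class of $B_n+1$ Lipschitz-$L_1$ sections, followed by the standard internalisation step (replace each useful external center by a nearby element of $\cg_k$, doubling the radius) and an argmin-type selection; the only genuine difference is that the paper imports the bound $\log N(\varepsilon,\|\cdot\|_\infty,\cdot)\le 2MB_n/\varepsilon$ from a Kolmogorov--Tikhomirov-type result (with details in the cited thesis), whereas you re-derive it by an explicit staircase discretisation. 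Two small remarks on your version: dropping the $L_2$-Lipschitz constraint in $y$ and covering the larger product class is legitimate (an external cover of a superset covers the subset), but your appeal to ``$B_n\to\infty$ absorbs the lower-order terms'' is not quite the right justification---the additive $\log(M/\epsilon)$ contribution occurs once per section, so it scales with $B_n$ just like the budget, and it is dominated simply because $\log x\lesssim x$ uniformly in $k,s$ (with the trivial case of large $\epsilon$ handled by a single ball); correspondingly, the mesh and lattice spacings must be chosen with a little care so that the factor $\log 3$ times the number of mesh cells, at radius $\epsilon/2$ before internalisation, stays below the per-section budget $4M/\epsilon$, which is possible since $L_1<1$ but is tighter than your $O(\cdot)$ bookkeeping suggests.
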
   

\begin{rem}\label{Remark2}
As a simple consequence of Lemma \ref{lemma:covering} \ref{lemma:f_g}, we obtain a bound for the remainder term $R_n$,
\begin{IEEEeqnarray}{rCl}
P \Bigg\{ \sup_{g \in \cg_k} R_n(g) > 2^{2k-3} \gamma \delta^2 \Bigg\} 
\lesssim 2^{-2k} \delta^{-2} \frac{q}{n-t}\,. \label{eq:Rn_bound}
\end{IEEEeqnarray} 
This follows from Markov's inequality in combination with the inequality
\begin{IEEEeqnarray}{+rCl+x*}
E \big[ \sup_{g \in \cg} |R_n(g)| \big] 
& \leq & E \Big[ \sup_{g \in \cg} \frac{1}{n-t} \sum_{i=2Nq}^{n-1} \big| f_t(g;V^*_{t+i})\big| + E \big|f_t(g;V^*_{t+i}) \big| \Big] \nonumber \\
& \leq & \frac{1}{n-t} \sum_{i=2Nq}^{n-1} \frac{2 M}{1-L_1} \ 2\, E \big( Y_{t+i+1}^* + M \big) \nonumber \\
& \leq & \frac{q}{n-t} \frac{16\,M^2}{1-L_1}\,,
\end{IEEEeqnarray}
which follows from Lemma \ref{lemma:covering} \ref{lemma:f_g} and the fact that $f_t(m;V_i^*) = 0$.
\end{rem}

Bounding the empirical process in line \eqref{eq:empirical_1} by means of the chaining argument is much more technical. As often in these situations, it is convenient to add some extra randomness in terms of an independent sequence of Rademacher variables. This technique, which is formalized in the following lemma, is well established and has a wide range of applications, e.g. in proving uniform laws of large numbers 
\citep{gine2015,gyorfi2002}. 

\begin{lemma}\label{symmetrisierungslemma} 
On a probability space $(E,\ce,P)$\,, let $Y_1,\ldots,Y_n$ be $\Rbb^d$-valued random variables and $\varepsilon_1,\ldots,\varepsilon_n$ independent Rademacher variables. Let $\ch$ be a class of continuous functions $h\colon \Rbb^d \to \Rbb$ such that $\int |h(Y)|^2 \, dP < \infty$. If $\sup_{h \in \ch} \ \var \Big( \sum_{i=1}^n h(Y_i) \Big) \leq \frac{t^2}{8}$,
\begin{IEEEeqnarray*}{rCl}
P \Bigg\{ \sup_{h \in \ch} \big| \sum_{i=1}^n h(Y_i) \big| > t \Bigg\}
\leq 4 \, P \Bigg\{ \sup_{h \in \ch} \big| \sum_{i=1}^n \varepsilon_i \, h(Y_i) \big| > t/4 \Bigg\}\,.
\end{IEEEeqnarray*}
\end{lemma}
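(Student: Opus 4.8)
The plan is to prove this symmetrization inequality by the standard "two independent copies" argument. Let me sketch it.

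=== PROOF PROPOSAL ===

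\textbf{Approach.} The statement is the classical symmetrization (or "first symmetrization") lemma, and the natural route is the ghost-sample argument combined with Chebyshev's inequality. First I would introduce an independent copy $Y_1',\ldots,Y_n'$ of the sample $Y_1,\ldots,Y_n$ (defined on a suitably enlarged product space, still called $(E,\ce,P)$ by abuse of notation), independent also of the Rademacher sequence $\varepsilon_1,\ldots,\varepsilon_n$. Write $S_h := \sum_{i=1}^n h(Y_i)$ and $S_h' := \sum_{i=1}^n h(Y_i')$, and set $\mu_h := \ew S_h = \ew S_h'$. Since the $h$ are centered only up to the constant $\mu_h$, I would run the argument keeping track of $\mu_h$ carefully, or — cleaner — observe that replacing $h$ by $h-\mu_h$ changes nothing in either probability (the Rademacher sum is unaffected in distribution because $\sum\varepsilon_i$ is symmetric, and the left side is unchanged), so WLOG assume $\mu_h=0$ for all $h\in\ch$. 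Then $\var(S_h)=\ew S_h^2$, and the hypothesis reads $\sup_h \ew S_h^2 \le t^2/8$.

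\textbf{Key steps.} (1) On the event $\{\sup_h |S_h| > t\}$ pick (measurably, via continuity of $h$ and separability considerations, or just work with a fixed countable dense subindex) an $h^\star = h^\star(\omega)$ with $|S_{h^\star}|>t$. (2) For that $h^\star$, Chebyshev gives $P\{|S_{h^\star}'| > t/2 \mid Y_1,\ldots,Y_n\} \le \frac{4\ew (S_{h^\star}')^2}{t^2} \le \frac{4\cdot t^2/8}{t^2} = \tfrac12$, using independence of the ghost sample and the variance bound; hence with conditional probability at least $1/2$ we have $|S_{h^\star} - S_{h^\star}'| \ge |S_{h^\star}| - |S_{h^\star}'| > t - t/2 = t/2$. (3) Integrating, $P\{\sup_h |S_h - S_h'| > t/2\} \ge \tfrac12 P\{\sup_h |S_h| > t\}$, i.e.
\begin{IEEEeqnarray*}{rCl}
P \Big\{ \sup_{h\in\ch} |S_h| > t \Big\} \le 2\, P\Big\{ \sup_{h\in\ch} \big|\textstyle\sum_{i=1}^n (h(Y_i)-h(Y_i'))\big| > t/2 \Big\}.
\end{IEEEeqnarray*}
(4) Now $\big(h(Y_i)-h(Y_i')\big)_i$ is a sequence of symmetric (around $0$) independent random vectors in the $h$-indexed sense, so attaching the Rademacher signs does not change the joint distribution of the process $h \mapsto \sum_i \varepsilon_i(h(Y_i)-h(Y_i'))$; therefore the probability on the right equals $P\{\sup_h |\sum_i \varepsilon_i (h(Y_i)-h(Y_i'))| > t/2\}$. (5) Split via the triangle inequality: this is at most $P\{\sup_h|\sum_i \varepsilon_i h(Y_i)| > t/4\} + P\{\sup_h|\sum_i \varepsilon_i h(Y_i')| > t/4\}$, and both terms are equal by the $Y \overset{d}{=} Y'$ assumption, giving the bound $2 \cdot 2 \cdot P\{\sup_h |\sum_i \varepsilon_i h(Y_i)| > t/4\} = 4\,P\{\sup_h |\sum_i \varepsilon_i h(Y_i)| > t/4\}$, which is the claim.

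\textbf{Main obstacle.} The genuine mathematical content is entirely in steps (2)–(4): the Chebyshev bound exploiting exactly the factor $1/8$ in the variance hypothesis (so that the ghost-sample deviation is controlled with probability $\ge 1/2$), and the symmetry argument that justifies inserting the $\varepsilon_i$. The only real technical nuisance is \emph{measurability of the suprema} — that $\omega \mapsto \sup_{h\in\ch}|S_h(\omega)|$ is measurable and that the selection $h^\star(\omega)$ can be handled. Since $\ch$ is a class of continuous functions on $\Rbb^d$, one reduces to a countable dense subclass (e.g. restricting parameters to a countable dense set, or invoking separability of $\ch$ in sup-norm on compacts together with the fact that only finitely many points $Y_1(\omega),\ldots,Y_n(\omega)$ enter), so the suprema are realized as countable suprema and no outer-probability subtleties arise; alternatively one may simply state the inequality for each finite subclass and pass to the limit. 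I would either invoke this reduction in one sentence or cite the standard treatment in \cite{gine2015} or \cite{gyorfi2002}, and otherwise present steps (1)–(5) in full.
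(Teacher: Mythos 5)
The paper never proves this lemma: it is quoted as a well-established symmetrization device with a pointer to \cite{gine2015} and \cite{gyorfi2002}, so there is no internal proof to compare against. Your ghost-sample argument is exactly the canonical route, and the constants come out right: Chebyshev with $\var\le t^2/8$ gives the conditional bound $1/2$ for the ghost deviation (factor $2$), the sign-flip identity transfers the bound to the symmetrized differences, and the triangle-inequality split produces the final factor $4$ at level $t/4$. The remark on reducing to a countable subclass for measurability is also the right way to dispose of that issue.

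Two points in your write-up need repair, both concerning hypotheses rather than the mechanism. First, the ``cleaner'' WLOG is false: replacing $h$ by its centred version does change the left-hand probability, and $\sup_{h}\big|\sum_i\varepsilon_i\big(h(Y_i)-\mu_h/n\big)\big|$ is not distributed like $\sup_h\big|\sum_i\varepsilon_i h(Y_i)\big|$ --- the symmetry of $\sum_i\varepsilon_i$ cannot be applied uniformly in $h$ under the supremum. Indeed, without centring the stated inequality is simply wrong: for the single constant function $h\equiv 1$ and $t=8\sqrt n$ the variance hypothesis holds trivially, the left side equals $1$ for large $n$, while the right side is $4\,P\{|\sum_i\varepsilon_i|>2\sqrt n\}<1$. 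The correct reading, and the only way the lemma is used in the paper (on the centred blocks $X^*_{2j}(g)$ built from $f_t-E f_t$), is that $E\sum_i h(Y_i)=0$ for every $h\in\ch$; assume this explicitly instead of arguing it away. Second, your step (4) silently uses that $Y_1,\dots,Y_n$ (and the ghost copies) are independent across $i$: flipping the sign of a single coordinate amounts to swapping $Y_i$ with $Y_i'$ in that coordinate only, which preserves the joint law only under independence. The statement of the lemma omits this, but it is needed, and it holds in the application because the even-indexed blocks are i.i.d.\ by the coupling. With centring and independence made explicit, your steps (1)--(5) constitute a complete and correct proof of the inequality actually used in the paper.
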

In Lemma \ref{lemma:variance}, Part (ii) applies Lemma \ref{symmetrisierungslemma} to the sums $\frac{q}{n-t} \sum_{j=0}^{N-1}  X^*_{2j}(g)$, for which Part (i) supplies the requisite uniform variance bound.  

\begin{lemma}\label{lemma:variance}
Suppose that $\delta(n) \asymp n^{-1/3} \log n$ and $t(n) = - \big\lceil \frac{2}{3 \log L_1} \log n \big\rceil$. Let the length of the blocks $X_{2j}^*(g)$ also depend on the sample size in such a way that $q(n) \asymp t(n)$. Recall the constant $\gamma$ introduced in Lemma \ref{Lemma_1} (ii).
Then
\begin{enumerate}[label=(\roman*),wide,labelwidth=!,labelindent=0pt]
\item \label{i} $\sup_{g \in \cg_k} \,  \var \left( \frac{q}{n-t} \sum_{j=0}^{N-1} X^*_{2j}(g) \right) \leq (M^2+2\,M^{3/2}+M) \, 2^{2k+4} \gamma \delta^2 \frac{q}{n-t} $\,,
\item for all $k \in \Nbb$ and almost all $n \in \Nbb$
\begin{IEEEeqnarray*}{rCl}
 P \Bigg\{ \sup_{g \in \cg_k} 
\frac{q}{n-t} \sum_{j=0}^{N-1} X^*_{2j}(g) > 
2^{2k-4} \gamma \delta^2 \Bigg\}  \nonumber
\leq 4 P \Bigg\{ \sup_{g \in \cg_k}
 \frac{q}{n-t}  \sum_{j=0}^{N-1} \varepsilon_j  X^*_{2j}(g) > 2^{2k-6} \gamma \delta^2 \Bigg\}\,. \label{eq:empirical}
\end{IEEEeqnarray*}
\end{enumerate}
\end{lemma}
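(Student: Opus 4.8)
The plan is to prove (i) by a direct second-moment computation on the block sum, and then to read off (ii) by checking that the hypothesis of the symmetrization Lemma~\ref{symmetrisierungslemma} is met and invoking it.

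For part (i), recall that by Corollary~\ref{lemma:2.3.3} (as noted in Remark~\ref{Remark1}) the blocks $X^*_{2j}(g)$, $j=0,\dots,N-1$, are independent, and each is centered; hence, using $N\le (n-t)/(2q)$,
\[
\var\!\Bigl(\frac{q}{n-t}\sum_{j=0}^{N-1}X^*_{2j}(g)\Bigr)=\frac{q^2}{(n-t)^2}\sum_{j=0}^{N-1}\var\bigl(X^*_{2j}(g)\bigr)\le\frac{q}{2(n-t)}\max_{0\le j<N}\var\bigl(X^*_{2j}(g)\bigr).
\]
Now $X^*_{2j}(g)$ is a centered average of $q$ summands $f_t(g;V^*_\ell)-E f_t(g;V^*_\ell)$, and every $V^*_\ell$ entering such a block has marginal law $\pr^{\bm{Y}_0^{t+1}}$; so Jensen's inequality gives $\var\bigl(X^*_{2j}(g)\bigr)\le\ew\bigl[f_t(g;\bm{Y}_0^{t+1})^2\bigr]$. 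Writing $\Delta:=g^{[t]}(\bm{Y}_0^{t})-m^{[t]}(\bm{Y}_0^{t})$, I factor $f_t(g;\bm{Y}_0^{t+1})=\Delta\bigl(2Y_{t+1}-m^{[t]}(\bm{Y}_0^{t})-g^{[t]}(\bm{Y}_0^{t})\bigr)$ and bound $\bigl(2Y_{t+1}-m^{[t]}-g^{[t]}\bigr)^2\le 4Y_{t+1}^2+4M^2$ (the three quantities being non-negative and $m^{[t]},g^{[t]}\in[0,M]$), so that $f_t(g;\bm{Y}_0^{t+1})^2\le\Delta^2\bigl(4Y_{t+1}^2+4M^2\bigr)$. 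Conditioning on $\sigma(Y_0,\dots,Y_t)$ — on which $\Delta^2$ is measurable and $\ew[Y_{t+1}^2\mid\cdot]=\ew[\lambda_{t+1}+\lambda_{t+1}^2\mid\cdot]\le M+M^2$ because $\lambda_{t+1}=m(\lambda_t,Y_t)\in[0,M]$ — yields $\ew[f_t(g;\bm{Y}_0^{t+1})^2]\le C_M\,\ew[\Delta^2]$ for a constant $C_M$ depending only on $M$. Since $\ew[\Delta^2]\le 2^{2k+2}\gamma\delta^2$ for $g\in\cg_k$, this bounds $\var(X^*_{2j}(g))$ by a multiple of $2^{2k}\gamma\delta^2$ which, with a little slack, is at most $(M^2+2M^{3/2}+M)\,2^{2k+5}\gamma\delta^2$; inserting this in the display above gives exactly (i).

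For part (ii), fix $n$ and $k$. Let $\mathbf{W}_j$ denote the vector of those coordinates $V^*_\ell$ on which $X^*_{2j}(g)$ depends, and set $h_g(\mathbf{W}_j):=\tfrac{q}{n-t}X^*_{2j}(g)$; the maps $h_g$ are continuous (trivially, the relevant vectors being supported on $\{0\}\times\Nbb^{t+2}$) with finite second moment, and $\sum_{j=0}^{N-1}h_g(\mathbf{W}_j)=\tfrac{q}{n-t}\sum_{j=0}^{N-1}X^*_{2j}(g)$. By part (i), $\sup_{g\in\cg_k}\var\bigl(\sum_j h_g(\mathbf{W}_j)\bigr)\le(M^2+2M^{3/2}+M)\,2^{2k+4}\gamma\delta^2\,\tfrac{q}{n-t}$, and I must check that this is $\le\tfrac18\bigl(2^{2k-4}\gamma\delta^2\bigr)^2$, i.e.\ that $\tfrac{q}{n-t}\le 2^{2k-15}\gamma\delta^2/(M^2+2M^{3/2}+M)$. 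Since $q\asymp t\asymp\log n$ and $\delta^2\asymp n^{-2/3}(\log n)^2$, the ratio of the two sides is $\asymp 2^{-2k}n^{-1/3}(\log n)^{-1}\to 0$, and because the constraint is tightest at $k=0$ it holds for every $k\in\Nbb$ once $n$ is large — this is the source of the ``almost all $n$'' in the statement. The hypothesis of Lemma~\ref{symmetrisierungslemma} is thus verified for the class $\{h_g:g\in\cg_k\}$ with threshold $2^{2k-4}\gamma\delta^2$; applying Lemma~\ref{symmetrisierungslemma} (in its one-sided form, valid with the same constants since all these sums are centered) produces the factor $4$ and the new threshold $2^{2k-6}\gamma\delta^2$, which is the asserted inequality.

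The main obstacle is the structural input in part (i): that the blocks $X^*_{2j}(g)$ are genuinely independent and that $\max_j\var(X^*_{2j}(g))$ is controlled uniformly by $\ew[f_t(g;\bm{Y}_0^{t+1})^2]$. Since $X^*_r(g)$ is assembled from the length-$q$ window $V^*_{t+rq},\dots,V^*_{t+(r+1)q-1}$, shifted by $t$ relative to the natural coupling blocks, making this rigorous requires both that $q(n)\asymp t(n)$ is chosen with $q>t$ (so each window meets only two adjacent coupling blocks) and the precise $q$-dependence structure from Corollary~\ref{lemma:2.3.3} — exactly the fact used in Remark~\ref{Remark1}. Everything else — the Poisson second-moment bookkeeping behind $C_M$ and the stated envelope in (i), and the limit computation in (ii) — is routine.
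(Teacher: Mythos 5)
Your proposal is correct and follows essentially the same route as the paper: for (i) you exploit the i.i.d.\ structure of the even coupling blocks, reduce to the second moment of a single summand $f_t(g;\bm{Y}_0^{t+1})$, and bound it by a constant (depending only on $M$, via conditioning on the past and the Poisson moment bound $\lambda_{t+1}\le M$) times $\ew[m^{[t]}(\bm{Y}_0^t)-g^{[t]}(\bm{Y}_0^t)]^2\le 2^{2k+2}\gamma\delta^2$, landing within the stated constant; your cruder bound $(2Y-c)^2\le 4Y^2+4M^2$ merely replaces the paper's $\sup_{\alpha\in[0,M]}$ variance--bias decomposition at the cost of a factor absorbed by the slack. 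For (ii) you carry out the verification of the variance condition of Lemma~\ref{symmetrisierungslemma} (uniformly in $k$, for almost all $n$) that the paper leaves implicit, including the one-sided reading of the symmetrization inequality, so nothing is missing.
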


We have finally obtained a form of the empirical process that is well suited for the application of the chaining argument. The resulting bound is presented in the following lemma. 
  
\begin{lemma} \label{lemma:chaining}
There exists a constant $C_\delta > 0$ and a natural number $n_0$ such that for $\delta = C_\delta \ n^{-1/3} \log n$ 
\begin{IEEEeqnarray*}{rCl}
 P \Bigg\{ \sup_{g \in \cg_k} \frac{q}{n-t} 
&  & \sum_{j=0}^{N-1} \varepsilon_j  X^*_{2j}(g) > 2^{2k-6} \gamma \delta^2 \Bigg\} \\
&  & \hspace{1.7cm} \leq C \, 2^{-2k} \frac{\log n}{n} + e^M \, n^{-(k+1)} + \exp\Big( - C' \, n^{1/3} \ 2^k \Big) + C'' \ 2^{-k} (\log n)^{-1/2}
\end{IEEEeqnarray*}
for all $k \in \Nbb$, $n > n_0$, and some positive constants $C,C',C''$.
\end{lemma}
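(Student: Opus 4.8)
\emph{Strategy.} The plan is a peeling-and-chaining bound for the symmetrised block process $Z(g):=\tfrac{q}{n-t}\sum_{j=0}^{N-1}\varepsilon_j X_{2j}^*(g)$, carried out conditionally on the i.i.d.\ blocks $\{X_{2j}^*\}_j$ after discarding the event on which the coupled observations are atypically large. The four error terms will arise, in order, from a residual second-moment estimate, a Poisson-tail truncation, a Bernstein bound at the coarsest net point, and a Markov bound on the chaining oscillation.

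\emph{Truncation.} I would first restrict to $\mathcal A_n:=\{\max_{0\le i\le n-1}Y_i^*\le 2(k+2)\log n\}$. The exponential Poisson tail bound recalled before~(A2) gives $P(\mathcal A_n^c)\le e^{M}n^{-(k+1)}$, the second error term. On $\mathcal A_n$, Lemma~\ref{lemma:covering} \ref{lemma:f_g} turns into a Lipschitz estimate $|f_t(g;V_i^*)-f_t(h;V_i^*)|\le\ell_n\|g-h\|_\infty$ with $\ell_n\asymp k\log n$, hence $|X_{2j}^*(g)-X_{2j}^*(h)|\le2\ell_n\|g-h\|_\infty$ and $|f_t(g;V_i^*)|\le\ell_nM$, so every supremum over $\cg_k$ below may be replaced by a maximum over the finite nets $\cg_k^{(s)}$ of Lemma~\ref{lemma:covering} \ref{prop:g_s}.

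\emph{Chaining.} Fix the coarsest scale $s_0=0$ and a finest scale $s_\ast=O(\log n)$ large enough that the deterministic tail $\sup_{g\in\cg_k}|Z(g)-Z(\pi_{s_\ast,k}g)|\le\ell_n\,2^{-s_\ast}2^{k+1}\sqrt\gamma\,\delta$ is a small fraction of the target level $2^{2k-6}\gamma\delta^2$. At the base point $g_{0,k}$ the terms $\tfrac{q}{n-t}\varepsilon_jX_{2j}^*(g_{0,k})$ are i.i.d., mean zero, bounded by $\lesssim\tfrac{q}{n-t}k\log n$ on $\mathcal A_n$, with variance sum controlled by Lemma~\ref{lemma:variance}(i); Bernstein's inequality plus a union bound over the at most $e^{c\,n^{1/3}2^{-k}/C_\delta}$ elements of $\cg_k^{(0)}$ (using $B_n\asymp\log n$, so $B_n/\delta\asymp n^{1/3}$) leaves, for $C_\delta$ large, a bound $\exp(-C'n^{1/3}2^k)$, the third term. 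For $0<s\le s_\ast$ I chain $\pi_{s,k}g$ to $\pi_{s-1,k}g$; conditionally on $\{X_{2j}^*\}$ the increment is a weighted Rademacher sum with sub-Gaussian parameter $\sigma_s=\tfrac{q}{n-t}\bigl(\sum_j(X_{2j}^*(\pi_{s,k}g)-X_{2j}^*(\pi_{s-1,k}g))^2\bigr)^{1/2}$. The delicate point is that each summand has an $\ell_n\asymp k\log n$ almost-sure bound but only a $\log$-free second moment: Lemma~\ref{lemma:covering} \ref{lemma:f_g} combined with $E[(Y_0^*)^2]<\infty$ (a bounded Poisson-mixture moment) gives $E[(X_{2j}^*(g)-X_{2j}^*(h))^2]\lesssim\|g-h\|_\infty^2$ with an absolute constant, so a Bernstein estimate for the i.i.d.\ sum $\sum_j(X_{2j}^*(\pi_{s,k}g)-X_{2j}^*(\pi_{s-1,k}g))^2$ shows $\sigma_s^2\lesssim\tfrac{q}{n-t}(2^{-s}2^{k}\delta)^2$ simultaneously over all net points, off an event whose probability---summed over the $O(\log n)$ scales and the nets---is negligible (since $N/\ell_n^2\gtrsim n/\mathrm{polylog}(n)$ dominates $\log\#\cg_k^{(s)}\lesssim n^{2/3}2^{-k}$). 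On that event the standard sub-Gaussian maximal inequality over $\cg_k^{(s)}$, with $\log\#\cg_k^{(s)}\lesssim B_n2^{s-k}/\delta$, yields a geometric chain whose sum and conditional expectation are of order $n^{-2/3}(\log n)^{3/2}2^{k/2}$; here the enlarged $\delta=C_\delta n^{-1/3}\log n$ is essential, since it makes the target level $2^{2k-6}\gamma\delta^2\asymp2^{2k}n^{-2/3}(\log n)^2$ exceed this bound, which it would not with the plain rate $n^{-1/3}$. Dividing by the target and applying Markov gives a term $\lesssim2^{-3k/2}(\log n)^{-1/2}\le2^{-k}(\log n)^{-1/2}$, the fourth term, while the leftover low-scale fluctuation, controlled only by a second moment of order $\tfrac{q}{n-t}\asymp\tfrac{\log n}{n}$, contributes the first term $C\,2^{-2k}\tfrac{\log n}{n}$. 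Summing the four contributions and adjusting $C_\delta$ and $n_0$ completes the proof.

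\emph{Main obstacle.} The real difficulty is the clash between the only tractable entropy---the $\|\cdot\|_\infty$-covering numbers of Lemma~\ref{lemma:covering} \ref{prop:g_s}, whose logarithm is already of order $B_n2^{s-k}/\delta\asymp n^{1/3}2^{s-k}$ at coarse scales---and the $L_2(P)$ geometry that truly governs the fluctuations of $Z$, the only bridge between them (Lemma~\ref{lemma:covering} \ref{lemma:f_g}) being lossy by the unbounded factor $Y_{i+1}^*$. One must therefore truncate $Y_{(n)}^*$ at the $\log n$ scale, use the $\log$-inflated Lipschitz bound only where a deterministic or almost-sure estimate is needed (the remainder and the Bernstein correction), keep the $\log$-free second-moment bound for the variance proxy driving the chaining, and then verify that the enlarged $\delta=C_\delta n^{-1/3}\log n$ makes the target level $2^{2k}\gamma\delta^2$ exceed the resulting entropy-integral bound $n^{-2/3}(\log n)^{3/2}2^{k/2}$. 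Making the truncation level, the two chaining scales and the constant $C_\delta$ conspire so that all four error terms emerge in exactly the stated shape is the principal bookkeeping burden.
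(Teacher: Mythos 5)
Your proposal follows essentially the same route as the paper's proof: the same base-point/increment/residual decomposition over the $\|\cdot\|_\infty$-nets of Lemma \ref{lemma:covering} \ref{prop:g_s}, a Poisson-tail truncation at level $\asymp (k+2)\log n$ giving $e^M n^{-(k+1)}$, Bernstein plus a union bound at the coarsest net with $C_\delta$ chosen so that the exponent $\asymp 2^k n^{1/3}$ dominates the entropy, a conditional sub-Gaussian (Hoeffding-type) maximal bound on the chaining increments followed by Markov yielding $2^{-3k/2}(\log n)^{-1/2}\le 2^{-k}(\log n)^{-1/2}$, and a second-moment estimate of order $2^{-2k}q/(n-t)\asymp 2^{-2k}\log n/n$ for the residual beyond the finest scale. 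The only deviations are execution details—the paper controls the residual and the chaining variance proxies through unconditional second moments of $Y^*$ (via $q$-dependence and an integrated maximal inequality) rather than your high-probability empirical-variance event, and applies the truncation only at the base point—which do not change the structure or the resulting four error terms.
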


Combining the estimate in Remark \ref{Remark1} with Lemma \ref{lemma:chaining} and Remark \ref{Remark2}, we infer that there exists a constant $C_\delta > 0$ such that for all but finitely many $n$ and all $k \in \Nbb$
\begin{IEEEeqnarray}{rCl}
 \pr \Big\{ L(\hat{m}_n,m) > C_\delta^2 \ n^{-2/3} (\log n)^2  \Big\} 
& \leq & \tilde{C} \big( b_n + \sum_{k=0}^\infty a_{n,k} \big) \,,
\end{IEEEeqnarray}
with
\begin{IEEEeqnarray}{rCl}
a_{n,k} 
& := & 2^{-2k} 
\frac{\log n}{n} + n^{-k}  n^{-1} + \exp\Big( - C' \, n^{1/3} \ 2^k \Big) +  2^{-k} (\log n)^{-1/2}  + 2^{-2k} \delta^{-2} \frac{q}{n-t}\,,\\
b_n  
& := & \frac{n-t}{q} \phi^t(q) + o(1)\,,
\end{IEEEeqnarray}
and some constant $\tilde{C} > 0$. Recall that $t(n) = - \big \lceil \frac{2}{3 \log L_1} \log n \big \rceil$ and $q(n) \asymp t(n)$. These facts imply
$\delta_n^{-2} \frac{q}{n-t} 
 \asymp  n^{-1/3} (\log n)^{-1}$, 
whence we infer that $\lim_{n \to \infty} a_{n,k} = 0$ for all $k\in \Nbb$ . Since there exists an absolute summable sequence 
$\{\eta_k\}_{k \in \Nbb} \subset \Rbb$ such that $\sup_{n \geq 2} |a_{n,k}| \leq \eta_k$ for any $k$, we conclude that $\sum_{k=0}^\infty a_{n,k} \to 0$ as $n \to \infty$. Because $\phi^t(q) \lesssim (L_1 + L_2)^{q-t}$, the sequence $\{q(n)\}$ can be specified such that
$\frac{n-t}{q} \phi^t(q) \asymp  (\log n)^{-1}$,
implying that
$\lim_{n \to \infty} b_n = 0$ as well. The proof of Theorem \ref{thm:main} is now complete.

\section{Proofs of auxiliary results}\label{sec:proof_aux}

\begin{proof}[Proof of Lemma \ref{Lemma_1}]
First of all, note that $\limsup_{n \to \infty} L_1^t / \delta^2 = 0$. Hence, for some $\varepsilon > 0$ there exists a number $n_0$ such that $\varepsilon \big( L(\hat{m}_m,m) \big)^{1/2} > M \, L_1^t$ for all $n>n_0$ and all $\omega \in \Omega_0 = \big\{ \omega \in \Omega \colon L(\hat{m}_n,m) > \delta^2 \big\}$. After some straightforward calculations exploiting the assumption that both $m$ and $\hat{m}_n$ possess the contraction property, we can show that for $n > n_0$
\begin{IEEEeqnarray*}{rCl}
&     & \ew_{|\hat{m}_n = g} \Big[ \underbrace{\left( Y'_{t+1} - g^{[t]}(0,Y'_0, \ldots, Y'_{t}) \right)^2 
                          - \left( Y'_{t+1} - m^{[t]}(0,Y'_0, \ldots, Y'_{t}) \right)^2}_{= -f_t(g;\bm{Y'}\,_0^{t+1})} \Big]\nonumber\\
&     & \hspace{3cm} \geq \ew_{|\hat{m}_n = g} \left[m^{[t]}(0,Y'_0, \ldots, Y'_{t})	- g^{[t]}(0,Y'_0, \ldots, Y'_{t}) \right]^2                          - 3 M^2  L_1^{t}                              
\end{IEEEeqnarray*}
and 
\begin{IEEEeqnarray}{rCl}
 \ew_{|\hat{m}_n = g} \left[m^{[t]}(0,Y'_0, \ldots, Y'_{t})	- g^{[t]}(0,Y'_0, \ldots, Y'_{t}) \right]^2  \nonumber
&    &  \geq  \frac{(1-\varepsilon)^2}{12}\, L(\hat{m}_n,m) - 2 \, M^2 L_1^{t}\,,
\end{IEEEeqnarray}
for almost all $\omega \in \Omega_0$ \citep[pp. 42--47]{Wechsung2019}. We define the constant $\gamma := (1-\varepsilon)^2/24$ and conclude that there exists a number $n_1 \geq n_0$ such that for almost all $\omega \in \Omega_0$ and all $n > n_1$,
     \begin{IEEEeqnarray}{rCl}
    &   & \ew_{|\hat{m}_n = g} \left[ m^{[t]} (0, Y'_0, \ldots, Y'_{t}) - g^{[t]}(0, Y'_0, \ldots, Y'_{t}) \right]^2  
    >   \frac{(1-\varepsilon)^2}{12} \delta^2 - 2\,M^2 L_1^t
   \geq  \gamma \delta^2\,, \label{eq:cond1} \\
    &    & \ew_{|\hat{m}_n = g} \big[ -f_t(g;\bm{Y'}\,_0^{t+1}) \big]  
      > \ew_{|\hat{m}_n = g} \left[ m^{[t]} (0, Y'_0, \ldots, Y'_{t}) - g^{[t]}(0, Y'_0, \ldots, Y'_{t}) \right]^2 - \frac{\gamma}{2} \delta^2\,.  \label{eq:cond2}
\end{IEEEeqnarray}
In other words, $\Omega_0 \subset \big\{\omega \in \Omega \colon \text{\eqref{eq:cond1} and \eqref{eq:cond2} hold}\big\}$ for $n > n_1$, up to a null set. 

For any $g \in \cg_n^{\textit{cand}}$, we have $\Phi_n(g) - \Phi_n(\hat{m}_n) \geq 0$. Now let the random function $m_n \in \cg_n^{\textit{cand}}$ be defined by $m_n(\lambda,y) = m(\lambda,y \wedge Y_{(n)})$ for all $\lambda \leq M$ and $y \in \Nbb$. Then $\Phi_n(m) = \Phi_n(m_n)$ and we conclude that
$\frac{1}{n} \sum_{i=0}^{n-1} f_i(\hat{m}_n;\bm{Y}\,_{0}^{i+1}) = \Phi_n(m) - \Phi_n(\hat{m}_n) \geq 0$. Furthermore, by equation \eqref{eq:cand}, $\pr\big\{\hat{m}_n \notin \cg(B_n,M,L_1,L_2)\big\} = o(1)$. Now let $Y'_0, \ldots, Y'_n$ be a ghost sample with the same distribution as $Y_0,\ldots,Y_n$ but independent of the data generating process, and define $\big(0,Y'_0, \ldots, Y'_t \big) =: \bm{Y'}\,_0^{t}$. Then
\begin{IEEEeqnarray}{rCl}
&      & \pr \Big\{  L(\hat{m}_n,m) > \delta^2 \ \Big\}\nonumber \\  
& \leq & \pr  \Bigg\{ \ew_{|\hat{m}_n = g} \left[ m^{[t]}(\bm{Y'}\,_{0}^{t}) - g^{[t]}(\bm{Y'}\,_{0}^{t}) \right]^2 > \gamma \delta^2 \, ; \nonumber \\
&      & \hspace{1cm}        
\ew_{|\hat{m}_n = g} \left[ - f_{t}(g;\bm{Y'}\,_{0}^{t+1}) \right] > \ew_{|\hat{m}_n = g} \left[ m^{[t]}(\bm{Y'}\,_{0}^{t}) - g^{[t]}(\bm{Y'}\,_{0}^{t}) \right]^2 -  \frac{\gamma}{2} \delta^2 \Bigg\} \nonumber \\
& \leq &  \pr  \Bigg\{ \ew_{|\hat{m}_n = g} \left[ m^{[t]}(\bm{Y'}\,_{0}^{t}) - g^{[t]}(\bm{Y'}\,_{0}^{t}) \right]^2 > \gamma \delta^2 \, ; \nonumber \\
&      & \hspace{1cm} \frac{1}{n}\sum_{i=0}^{n-1} f_i(\hat{m}_n;\bm{Y}\,_{0}^{i+1}) -  \ew_{|\hat{m}_n = g} \left[ f_{t}(g;\bm{Y'}\,_{0}^{t+1}) \right] \nonumber \\
&      & \hspace{4cm} > \ew_{|\hat{m}_n = g} \left[ m^{[t]}(\bm{Y'}\,_{0}^{t}) - g^{[t]}(\bm{Y'}\,_{0}^{t}) \right]^2 -  \frac{\gamma}{2} \delta^2 \Bigg\} \nonumber \\
&\leq  &  \pr \Bigg\{ \exists \, g \in \cg(B_n,M,L_1,L_2) \colon   \ew \left[ m^{[t]}(\bm{Y}_{0}^{t}) - g^{[t]}(\bm{Y}_{0}^{t}) \right]^2 > \gamma \delta^2 \,; \label{eq:2.12}          \\
&      & \hspace{1cm}  \frac{1}{n}\sum_{i=0}^{n-1} f_i(g;\bm{Y}_{0}^{i+1}) -  \ew  f_t(g;\bm{Y}_{0}^{t+1})  > \ew \left[ m^{[t]}(\bm{Y}_{0}^{t}) - g^{[t]}(\bm{Y}_{0}^{t}) \right]^2 - \frac{\gamma}{2} \delta^2 \Bigg\} + o(1) \nonumber
\end{IEEEeqnarray}
for $n > n_1$. Using the decomposition
\begin{IEEEeqnarray*}{rCl}
\left\{ \ew \left[ m^{[t]}(\bm{Y}_{0}^{t}) - g^{[t]}(\bm{Y}_{0}^{t}) \right]^2 > \delta^2 \gamma  \right\} 
&  = & \bigcup_{k=0}^\infty \left\{ 2^{2k+2}   \delta^2 \gamma  
\geq \ew \left[ m^{[t]}(\bm{Y}_{0}^{t}) - g^{[t]}(\bm{Y}_{0}^{t}) \right]^2 >  2^{2k}  \delta^2 \gamma  \right\} \hfill
\end{IEEEeqnarray*}
and introducing $\cg_k :=\big\{g \in \cg(B_n,M,L_1,L_2) \colon \ew[m^{[t]}(\bm{Y}_{0}^{t}) - g^{[t]}(\bm{Y}_{0}^{t})]^2 \leq 2^{2k+2} \gamma \delta^2\big\}$, we write \eqref{eq:2.12} further as
\begin{IEEEeqnarray}{rCl}
&     &\pr \bigcup_{k=0}^\infty                   \Bigg\{ \exists \, g \in \cg(B_n,M,L_1,L_2): \  2^{2k+2} \gamma \delta^2 \geq  \ew \left[ m^{[t]}(\bm{Y}_{0}^{t}) - g^{[t]}(\bm{Y}_{0}^{t}) \right]^2 > 2^{2k} \gamma \delta^2  \,; 
\nonumber \\
&     &                                              \qquad \qquad \frac{1}{n}\sum_{i=0}^{n-1}
 f_i(g;\bm{Y}_{0}^{i+1}) -  \ew  f_t(g;\bm{Y}_{0}^{t+1})  >  \ew \left[ m^{[t]}(\bm{Y}_{0}^{t}) - g^{[t]}(\bm{Y}_{0}^{t}) \right]^2  - \frac{\gamma}{2} \delta^2 \Bigg\} \nonumber \\
&\leq &  \pr \bigcup_{k=0}^\infty \Bigg\{ \sup_{g \in \cg_k} 
 \frac{1}{n}\sum_{i=0}^{n-1}
 f_i(g;\bm{Y}_{0}^{i+1}) -  \ew  f_t(g;\bm{Y}_{0}^{t+1})  > 2^{2k-1} \gamma \delta^2 \Bigg\}\,.   
\end{IEEEeqnarray}

This is almost the statement of the lemma. We just have to substitute $\frac{1}{n}\sum_{i=0}^{n-1}
 f_i(g;\bm{Y}_{0}^{i+1})$ with $\frac{1}{n-t} \sum_{i=t}^{n-1} f_t(g;\bm{Y}_{i-t}^{i+1})$. To that end, we invoke the triangle inequality for probabilities and the fact that by stationarity $\ew  f_t(g;\bm{Y}_{0}^{t+1}) = \ew  f_t(g;\bm{Y}_{i-t}^{i+1})$ for $i \geq t$ and conclude,
\begin{IEEEeqnarray}{rCl}
&           & \pr \bigcup_{k=0}^\infty \Bigg\{ \sup_{g \in \cg_k} 
 \frac{1}{n}\sum_{i=0}^{n-1}
 f_i(g;\bm{Y}_{0}^{i+1}) -  \ew  f_t(g;\bm{Y}_{0}^{t+1})  > 2^{2k-1} \gamma \delta^2 \Bigg\} \nonumber  \\
 & \leq     & \pr \bigcup_{k=0}^\infty \Bigg\{ \sup_{g \in \cg_k} 
 \frac{1}{n-t}\sum_{i=t}^{n-1}
 \big( f_t(g;\bm{Y}_{i-t}^{i+1}) -  \ew  f_t(g;\bm{Y}_{i-t}^{i+1}) \big)  > 2^{2k-2} \gamma \delta^2 \Bigg\} \nonumber \\
 &          & \qquad \qquad + \pr \Bigg\{\underbrace{ \sup_{g \in \cg} 
  \Big| \frac{1}{n} \sum_{i=0}^{n-1} f_i(g;\bm{Y}_0^{i+1}) - \frac{1}{n-t} \sum_{i=t}^{n-1} f_t(g;\bm{Y}_{i-t}^{i+1}) \Big|}_{=: \Delta_n} > \frac{ \gamma \delta^2 }{4} \Bigg\}\,.  \label{eq:Delta_n}
\end{IEEEeqnarray}

It can be shown that $\ew \Delta_n \lesssim \frac{t}{n} + L_1^t$. This is again done by straightforward computations using the contraction property and the uniform boundedness of the candidate functions. Essentially, the term $\frac{t}{n}$ is owed to the different lengths of the sums ($n$ and $n-t$ addends, respectively), and the term $L_1^t$ is the order of the difference $\big| f_i(g;\bm{Y}_0^{i+1}) - f_t(g;\bm{Y}_{i-t}^{i+1}) \big|$. The latter fact is a consequence of the contraction property and the fact that all functions in $\cg$ are bounded by $M$. A simple application of Markov's inequality shows then that the probability in line \eqref{eq:Delta_n} converges to zero as
$n \to \infty$. \qedhere
\end{proof}

\begin{proof}[Proof of Lemma \ref{lemma:covering}] 
\begin{enumerate}[label=(\roman*),wide,labelwidth=!,labelindent=0pt]
\item Recall that all $g,h \in \cg$ are bounded by $M$ to conclude that
\begin{IEEEeqnarray}{rCl}
\big| f_t(g;V_i^*) - f_t(h;V_i^*) \big| 
&  =   &\Big| 2 Y_{i+1}^* \big( g^{[t]}(Z_i^*) - h^{[t]}(Z_i^*) \big) + \big[h^{[t]}(Z_i^*)  + g^{[t]}(Z_i^*)\big] \big[h^{[t]}(Z_i^*)  - g^{[t]}(Z_i^*)\big]   \Big| \nonumber \\
& \leq & (2 Y_{i+1}^* + 2M) \, \big| g^{[t]}(Z_i^*) - h^{[t]}(Z_i^*) \big|\,.  
\end{IEEEeqnarray}
By the contractive property applied to the first component, 
\begin{IEEEeqnarray}{rCl}
&     & \big|g^{[t]}(0,Y_{0}, \ldots, Y_i)-h^{[t]}(0,Y_{0}, \ldots, Y_i)\big| \nonumber  \\ 
&\leq & \big|g\big(g^{[t-1]}(0,Y_{0}, \ldots, Y_{i-1}),Y_i \big) - g \big( h^{[t-1]}(0,Y_{0}, \ldots, Y_{i-1}) , Y_i \big) \big| \nonumber \\
&     & + \big|g\big(h^{[t-1]}(0,Y_{0}, \ldots, Y_{i-1}),Y_i \big) - h \big( h^{[t-1]}(0,Y_{0}, \ldots, Y_{i-1}) , Y_i \big) \big| \nonumber \\
&\leq & L_1 \big|g^{[t-1]}(0,Y_{0}, \ldots, Y_{i-1}) - h^{[t-1]}(0,Y_{0}, \ldots, Y_{i-1})\big| + \| g- h \|_\infty \,,
\end{IEEEeqnarray}
and by an iteration of this argument, we see that $\big| g^{[t]}(Z_i^*) - h^{[t]}(Z_i^*) \big| \leq \|g-h\|_\infty \sum_{k=0}^t L_1^k$.

\item For a metric space $(X,d)$, we denote by $B_d(x,\varepsilon)$ the closed ball around $x \in X$ with radius $\varepsilon$ in the metric $d$. We call the quantity 
\begin{align}
N(\varepsilon, d, X) := \min \Big\{ N \in \Nbb: \, \exists \, x_1, \ldots, x_N \in X \text{ such that} \ X \subset \bigcup_{i = 1}^N B_d(x_i,\varepsilon) \Big\}
\end{align}
the covering number of the metric space $(X,d)$ for the resolution level $\varepsilon$. For $B \in \Nbb_+$ and $L>0$, suppose that $\tilde{\cg}$ is the class of functions defined by 
\begin{IEEEeqnarray}{rCl}
& &\tilde{\cg} := \Big\{ g = (g_0, \ldots, g_{B-1})' \colon [0,M]^B \to [0,M] \,; \nonumber   \\
& & \hspace*{3cm} |g_i(x) - g_i(y)| \leq L_1 \, |x-y| \ \text{ for all } x,y \in [0,M] \Big\} \,. 
\end{IEEEeqnarray}
A straightforward extension of a result by \cite{Kolmogorov1993} regarding covering numbers of classes of Lipschitz functions yields, $\log N(\varepsilon, \|\cdot\|_\infty, \tilde{\cg}) \leq 2MB/\varepsilon$ \citep[cf.][]{Wechsung2019}. Thus, it takes at most
$N := N\big( \epsilon \, , \, \cg(B_n,M,L_1,L_2) \, , \, \|\cdot\|_\infty \big) \leq e^{2MB_n / \epsilon}$
balls to cover the whole class $\cg(B_n,M,L_1,L_2)$ with $\|\cdot\|_\infty$-balls of radius $\epsilon$. Now let $\cg' \subset \cg(B_n,M,L_1,L_2)$ be an arbitrary subset and assume that the elements $\{h_1, \ldots, h_N\} \subset \cg(B_n,M,L_1,L_2)$ constitute a covering of $\cg(B_n,M,L_1,L_2)$ with such balls.
We want to find a set $\{h'_1, \ldots, h'_N\} \subset \cg'$ that constitutes a covering of $\cg'$ with balls of radius $2\epsilon$.  Since $\{h_1,\ldots,h_N\} \subset \cg(B_n,M,L_1,L_2)$ constitutes a covering of $\cg(B_n,M,L_1,L_2)$ with $\epsilon$-balls, we can select a minimal subset $\{h_{i_1}, \ldots, h_{i_{N'}}\} \subset \{h_1, \ldots, h_N\}$ with $1 \leq i_1 < \ldots < i_{N'} \leq N$ that constitutes an $\epsilon$-ball covering of $\cg'$. If all $h_{i_j} \in \cg'$, everything is proven. In this case, the set $\{h_{i_1}, \ldots, h_{i_{N'}}\} \subset \cg'$ constitutes also a $2\epsilon$-ball covering of $\cg'$. Otherwise, assume that $h_{i_j} \in \cg(B_n,M,L_1,L_2)\setminus \cg'$. Since the subset $\{h_{i_1}, \ldots, h_{i_{N'}}\}$ is without loss of generality assumed to be minimal, the set $B(h_{i_j}, \varepsilon) \cap \cg'$ is non-empty. Now pick an arbitrary $h'_{i_j} \in  B(h_{i_j} , \epsilon) \cap \cg'$, and observe that $B(h_{i_j}, \varepsilon) \cap \cg' \subset B(h'_{i_j}, 2\epsilon)\,.$
We can carry out this procedure for every $h_{i_j} \notin \cg'$ and replace this element with the obtained $h'_{i_j}$. All $h_{i_j}$ that are elements of $\cg'$ in the first place are simply relabelled $h'_{i_j}$. Hence, there exists a set $\{h'_{i_1}, \ldots, h'_{i_{N'}}\} \subset \cg'$ that constitutes a cover of $\cg'$ with balls of radius $2\epsilon$. This means that
\begin{align}
N(2\epsilon, \cg', \|\cdot\|_\infty) \leq N' \leq N =  N(\epsilon, \cg(B_n,M,L_1,L_2), \| \cdot \|_\infty) \leq e^{2MB_n/\epsilon} = e^{4MB_n/(2\epsilon)}
\end{align}
for any $k \in \Nbb$.
Since this consideration was independent of the specification of $\cg'$, we conclude that $N(\epsilon, \cg_k, \|\cdot\|_\infty ) \leq e^{4MB_n/\epsilon}$ for all $k$.

Plugging in $\epsilon = 2^{1+k-s}\sqrt{\gamma}\delta$ tells us that
$ N\big( 2^{-s} 2^{k+1} \sqrt{\gamma} \delta, \cg_k, \|\cdot\|_\infty \big) \leq \exp\big({2 M B_n 2^{s-k}\big/(\sqrt{\gamma}\delta)}\big)$.
Thus, there exists a set $\cg_k^{(s)} \subset \cg_k$ with at most $\exp\big({2MB_n2^{s-k} \big/ (\sqrt{\gamma} \delta)}\big)$ elements, and for any $g\in \cg$ there exists an element $h \in \cg_k^{(s)}$ with $\|g-h\|_\infty < 2^{-s} 2^{k+1} \sqrt{\gamma} \delta$. Let now $g \in \cg_k$ be arbitrary. Since $\cg^{(s)}_k$ is finite, the set 
\begin{IEEEeqnarray}{rCl}
\Pi_{s,k}(g)
& := &  \argmin_{h \in \cg_k^{(s)}} \|g-h\|_\infty   =  \big\{ h' \in \cg_k^{(s)} \colon \|g-h'\|_\infty \leq \|g-h\| \ \text{ for all } h \in \cg_k^{(s)} \big\} 
\end{IEEEeqnarray}
is not empty. Choose a representative from the finite set $\Pi_{s,k}(g)$ and call it $g_{s,k}$. \qedhere
\end{enumerate}
\end{proof}

\begin{proof}[Proof of Lemma \ref{lemma:variance}]
Recall that
$\cov (\xi,\eta) \leq \|\eta\|_{L_2(P)} \|\xi\|_{L_2(P)}$ for any two random variables $\xi,\eta$ on $(S,\Sigma,P)$. Thus, from $N < \frac{n-t}{q}$ and because the blocks $\big\{X^*_{2j}(g) \colon j = 0,\ldots,N-1\big\}$ are i.i.d. and centered, we infer,
\begin{IEEEeqnarray}{rCl}
         E \Bigg[ \frac{q}{n-t} \sum_{j=0}^{N-1} X^*_{2j}(g) \Big]^2 
& =   & \frac{q^2}{(n-t)^2} \sum_{j_1,j_2 = 0}^{N-1} E \, X^*_{2j_1}(g)X^*_{2j_2}(g) \leq    \frac{q}{n-t} E \left[ X^*_{0}(g) \right]^2 
\nonumber \\
&  =  &  \frac{q}{n-t} \frac{1}{q^2} \sum_{i_1,i_2=0}^{q-1} \cov(f_t(g;V^*_{i_1+t}),f_t(g;V^*_{i_2+t})) \nonumber \\
&\leq & \frac{q}{n-t} E \big(f_t(g;V_t^*)\big)^2 \,. \label{eq:cov}
\end{IEEEeqnarray}
Let us recall that $Z_t^* = \big( 0,Y_0^*, \ldots, Y_t^*\big)$ is measurable with respect to the $\sigma$-field $\cf_t^* := \sigma\{Y_s^* \colon s \leq t\}$. Therefore, 
\begin{IEEEeqnarray}{rCl}
&     &E \Big[ E_{|\cf_{t}^*} \sup_{\alpha \in [0,M]}\Big[\big( Y_{t+1}^* - \alpha \big) \,  \big( m^{[t]}(Z_t^*) - g^{[t]}(Z_t^*) \big) \Big]^2 \Big] 
\nonumber \\
&  =  &E \Big[ \big( m^{[t]}(Z_t^*) - g^{[t]}(Z_t^*) \big)^2 \, E_{|\cf_{t}^*}\Big[ \sup_{\alpha \in [0,M]} \big(Y_{t+1}^* - E_{|\cf_{t}^*} Y_{t+1}^* + E_{|\cf_{t}^*} Y_{t+1}^* - \alpha \big)^2  \Big] \Big] \nonumber \\
&\leq &E \Big[ \big( m^{[t]}(Z_t^*) - g^{[t]}(Z_t^*) \big)^2 \Big[ E_{|\cf_{t}^*} \Big( Y^*_{t+1} - E_{|\cf_{t}^*} Y^*_{t+1} \Big)^2 + \sup_{\alpha \in [0,M]}\big(  
 E_{|\cf_{t}^*} Y_{t+1}^* - \alpha \big)^2 \nonumber \\
&     & \hspace{3.5cm} + 2\, \sup_{\alpha \in [0,M]} \Big( \big| E_{|\cf_{t}^*} Y_{t+1}^* - \alpha \big|  \Big) \Big( E_{|\cf_{t}^*} \big| Y^*_{t+1} - E_{|\cf_{t}^*} Y^*_{t+1} \big| \Big) \Big] \Big]\nonumber \\
&\leq & E \Big[ \big( m^{[t]}(Z_t^*) - g^{[t]}(Z_t^*) \big)^2 \Big[	\var_{|\cf^*_{t}} \big( Y^*_{t+1}\big)   +  M^2 + 2\, M \, \sqrt{\var_{|\cf^*_{t}} \big( Y^*_{t+1}\big)} \Big] \Big] \nonumber \\
&\leq & (M^2 + 2\, M^{3/2} + M) \, E\big( m^{[t]}(Z_t^*) - g^{[t]}(Z_t^*) \big)^2 \,. \label{eq:alpha}
\end{IEEEeqnarray}
Since $\frac{1}{2}(m^{[t]}(Z_t^*) + g^{[t]}(Z_t^*)) \in [0,M]$, 
\begin{IEEEeqnarray}{+rCl+x*}
E \big( f_t(g;V_t^*) \big)^2 
& =   &  4 \ E \Bigg[ \Big( Y_{t+1}^* - \frac{m^{[t]}(Z_t^*) + g^{[t]}(Z_t^*)}{2}\Big) \big( m^{[t]}(Z_t^*) - g^{[t]}(Z_t^*) \big) \Bigg]^2 \nonumber \\
&\leq &4 \ E \sup_{\alpha \in [0,M]} \Big[ \big(Y_{t+1}^* - \alpha \big)\big( m^{[t]}(Z_t^*) - g^{[t]}(Z_t^*) \big) \Big]^2 \label{eq:second_moment} \\
&\leq & 4\,\big(M^2 + 2\,M^{3/2} + M\big) E\big( m^{[t]}(Z_t^*) - g^{[t]}(Z_t^*) \big)^2\,. & \qedhere \nonumber
\end{IEEEeqnarray}
\end{proof}

\begin{proof}[Proof of Lemma \ref{lemma:chaining}]
 In accordance with Lemma \ref{lemma:covering} \ref{prop:g_s}, for $g \in \cg(B_n,M,L_1,L_2)$ and $\check{S}(n) := \min \big\{ s \in \Nbb \colon \frac{4}{1-L_1} 2^{-s} \sqrt{\gamma} \delta \leq 2^{-6} \gamma \delta^2 /(15\,M) \big\}$, let the functions $g_{0,k}, \ldots, g_{\check{S},k}$ satisfy 
\begin{align}
\|g-g_{s,k}\|_\infty \leq 2^{-s}2^{k+1} \sqrt{\gamma} \delta\,.
\end{align} 
For every $n \in \Nbb$ the maximal index $\check{S} = \check{S}(n)$ can be written alternatively as $\check{S} = \min \big\{ s \in \Nbb \colon \frac{2}{1-L_1} 2^{-s}2^{k+1} \sqrt{\gamma} \delta \leq 2^{k-6} \gamma \delta^2 /(15\,M) \big\}$. 
 
Using a telescope representation of $g \in \cg(B_n,M,L_1,L_2)$, linearity of
$g\mapsto X_{2j}^*(g)$, and the triangle inequality for probabilities, we obtain
\begin{IEEEeqnarray}{rCl}
&       &P \Bigg\{ \sup_{g \in \cg_k} \frac{q}{n-t}  \sum_{j=0}^{N-1} \varepsilon_j  X^*_{2j}(g)  
              > 2^{2k-6} \gamma \delta^2 \Bigg\} \nonumber \\
& =     & P \Bigg\{ \sup_{g \in \cg_k} \frac{q}{n-t} \sum_{j=0}^{N-1} \varepsilon_j  \Big[  X^*_{2j}(g) - X^*_{2j}(g_{\check{S},k}) + X^*_{2j}(g_{0,k}) \nonumber \\
&       & \qquad + \sum_{s=0}^{\check{S}-1} \big(X^*_{2j}(g_{s+1,k}) - X^*_{2j}(g_{s,k}) \big) \Big]    > 2^{2k-6} \gamma \delta^2 \Bigg\}
\nonumber \\
& \leq  & P \Bigg\{ \sup_{g \in \cg_k} \frac{q}{n-t} \sum_{j=0}^{N-1} \varepsilon_j \big[   X^*_{2j}(g) - X^*_{2j}(g_{\check{S},k}) \big] 
              > 2^{2k-6} \gamma \delta^2 / 3 \Bigg\} \nonumber \\
&       & \qquad + P \Bigg\{ \sup_{g \in \cg_k} \frac{q}{n-t} \sum_{j=0}^{N-1}  \varepsilon_j   X^*_{2j}(g_{0,k}) 
              > 2^{2k-6} \gamma \delta^2 / 3 \Bigg\} \nonumber \\ 
&       & \qquad + P \Bigg\{ \sup_{g \in \cg_k} \sum_{s=0}^{\check{S}-1} \frac{q}{n-t} \sum_{j=0}^{N-1}  \varepsilon_j  \big(X^*_{2j}(g_{s+1,k}) - X^*_{2j}(g_{s,k}) \big) 
              > 2^{2k-6} \gamma \delta^2 / 3 \Bigg\} \nonumber \\
&  =:   & P_1 + P_2 + P_3\,. 
\end{IEEEeqnarray}
We treat each of the three terms separately. As to the first term, the index $\check{S}$ was chosen such that the approximation of $g$ by $g_{\check{S},k}$ is very accurate. Using the definition of $X^*_{2j}(g)$, Lemma \ref{lemma:covering} \ref{lemma:f_g}, and the definition of the sequence $\{g_{s,k}\}_{s=0, \ldots , \check{S}}$, we observe
\begin{IEEEeqnarray}{rCl}
         \big|  X^*_{2j}(g) - X^*_{2j}(g_{\check{S},k}) \big| 
& =    & \frac{1}{q} \Big| \sum_{i=0}^{q-1} \Big( 
          f_t(g;V^*_{t+2jq+i}) - 
          f_t({g_{\check{S},k}};V^*_{t+2jq+i}) \nonumber  \\
&      & \qquad \qquad \qquad 
            -  E \,  \big[ f_t(g;V^*_{t+2jq+i}) - f_t({g_{\check{S},k}};V^*_{t+2jq+i}) \big]  \Big) \Big| \nonumber \\         
&\leq  & \frac{1}{q} \sum_{i=0}^{q-1} \Big( 
         \underbrace{\big| f_t(g;V^*_{t+2jq+i}) - f_t({g_{\check{S},k}};V^*_{t+2jq+i}) \big|}_{\leq 2\|g-g_{\check{S},k}\|_\infty(Y^*_{t+2jq+i+1}+M)/(1-L_1)}  \nonumber \\
&      & \qquad \qquad \qquad 
            + \underbrace{ E \,  \big| f_t(g;V^*_{t+2jq+i}) - f_t({g_{\check{S},k}};V^*_{t+2jq+i}) \big| }_{\leq 2\|g-g_{\check{S},k}\|_\infty(EY^*_{t+2jq+i+1}+M)/(1-L_1)} \Big) \nonumber \\
& \leq & \frac{1}{q} \sum_{i=0}^{q-1} (Y^*_{t+2jq+i+1} + 3M) \frac{2}{1-L_1} 2^{-\check{S}} 2^{k+1} \sqrt{\gamma} \delta  \nonumber \\            
& \leq & \frac{1}{q} \sum_{i=0}^{q-1} (Y^*_{t+2jq+i+1} + 3M) 2^{k-6} \gamma \delta^2 / (15 M) \,.
\end{IEEEeqnarray} 
In the last estimate follows from the definition of
$\check{S}$. We conclude, 
\begin{IEEEeqnarray}{rCl}
&      & P \Bigg\{ \sup_{g \in \cg_k} \frac{q}{n-t} \Big|\sum_{j=0}^{N-1} \varepsilon_j \big[   X^*_{2j}(g) - X^*_{2j}(g_{\check{S},k}) \big] \Big| 
           > 2^{2k-6} \gamma \delta^2 / 3 \Bigg\} \nonumber \\ 
& \leq & P \Bigg\{ \frac{1}{n-t} \Big| \sum_{i=t}^{n-1} (Y_{i+1}^* + 3M) 2^{k-6}                			\gamma \delta^2 / (15 \, M) \Big| 
           >    2^{2k-6} \gamma \delta^2 / 3 \Bigg\} \nonumber \\
& \leq &P \Bigg\{ \frac{1}{n-t}\Big| \sum_{i=t}^{n-1}  Y_{i+1}^*\Big| >  2\,M\,2^{k} \Bigg\} \nonumber \\
& \leq &P \Bigg\{ \frac{1}{n-t}\Big| \sum_{i=t}^{n-1} (Y^*_{i+1} - E\,
Y^*_{i+1})\Big| > M \, 2^k \Bigg\} \nonumber \\
& \leq & \frac{2^{-2k} }{M^2\, (n-t)^2} \var \big( \sum_{i=t}^{n-1} Y^*_i \big)\,,
\end{IEEEeqnarray}
for any $k$. We recall that the process $\{Y_i^*\}$ is $q$-dependent and stationary and conclude that 
\begin{IEEEeqnarray}{rCl}
\var \big( \sum_{i=0}^{n-1} Y_i^* \big)
&  =  & \sum_{i,j=0}^{n-1} \cov \big( Y_i^*, Y_j^* \big) 
   =   \sum_{\substack{0 \leq i,j \leq n-1 \\ |i-j| \leq q}} \big( E(Y_i^*Y_j^*) - EY_i^*EY_j^* \big) \nonumber \\
&\leq & 2 \sum_{r=0}^{q-1} \sum_{i=1}^{n-r} \Big( \sqrt{EY_i^*\,^2}\sqrt{EY_{i+r}^*\,^2} - EY_i^*EY_{i+r}^* \Big) \nonumber  \\
&\leq & 2 n q \Big(E\big[Y_0^*\,^2\big] - \big[EY_0^*\big]^2\Big)
 \leq   2M \, nq\,. 
\end{IEEEeqnarray}
This proves that there exists a constant $C > 0$ and a number $n^{(P_1)}$ such that for all $n \in \Nbb$ with $n > n^{(P_1)}$ and all $k \in \Nbb$
\begin{align}
 P_1 = P \Bigg\{ \sup_{g \in \cg_k} \frac{q}{n-t} \Big|\sum_{j=0}^{N-1} \varepsilon_j \big[   X^*_{2j}(g) - X^*_{2j}(g_{\check{S},k}) \big] \Big| > 2^{2k-6} \gamma \delta^2 / 3 \Bigg\} \leq C \ 2^{-2k} \, \frac{q}{n-t}.
\end{align}

We proceed by addressing the second term, $P_2$. First of all, since for any $g \in \cg_k$ the first approximation $g_{0,k} = \pi_{0,k}g$ is selected from the finite set $\cg_k^{(0)}$, 
\begin{IEEEeqnarray}{rCl}
&    &P  \Bigg\{ \sup_{g \in \cg_k} \frac{q}{n-t} \sum_{j=0}^{N-1}  \varepsilon_j   X^*_{2j}(g_{0,k})   
				> 2^{2k-6} \gamma \delta^2 / 3 \Bigg\} \nonumber \\ 
&    & \hspace{2cm} = P  \Bigg\{ \max_{h_{0,k} \in \cg_k^{(0)}} \frac{q}{n-t} \sum_{j=0}^{N-1}  \varepsilon_j   X^*_{2j}(h_{0,k})  
				> 2^{2k-6} \gamma \delta^2 / 3 \Bigg\} \,.
\end{IEEEeqnarray}
This exceedance probability will be bounded with the help of Bernstein's inequality for sums of bounded random variables 
\citep[p.118]{gine2015}. To that end, we introduce the $\Sigma$-measurable events
\begin{IEEEeqnarray}{rCl}
A_{k,2j} = A_{k,2j}(n) := \big\{ \omega \in S \colon \max_{i =0, \ldots, q-1} Y^*_{t+2jq+i+1} \leq 2(k+2) \log n \big\} \,.
\end{IEEEeqnarray}
In order to bound the probability of these events, we use an exponential tail bound for Poisson variables 
\citep[p. 116]{gine2015} and the fact that $|\lambda_i|$ is uniformly bounded by $M$. Thence we conclude,
\begin{IEEEeqnarray}{+rCl+x*}
\pr \Big\{ \max_{1 \leq i \leq n} Y_i > 2(k+2) \, \log n \Big\} 
& \leq & \sum_{i=1}^n \ew \Big[ \pr \big\{ Y_i - \lambda_i > 2(k+2) \, \log(n) - \lambda_i \ \big| \ \lambda_i \big\} \Big] \nonumber \\
& \leq & \sum_{i=1}^n\, \ew \Big[ \exp \Big( - \frac{\big(2(k+2) \log(n) - \lambda_i\big)^2}{2 \lambda_i + \frac{2}{3} (2(k+2) \log(n) - \lambda_i)}  \Big) \Big] \nonumber \\
& \leq &  \sum_{i=1}^n \ew \Big[ \exp \Big( - {(k+2) \log(n) + \lambda_i} \Big) \Big] \nonumber \\
& \leq & \exp \Big( \log(n) - (k+2) \log(n) + M  \Big) \nonumber \\
& =    & e^M \, n^{-(k+1)}\,.
\end{IEEEeqnarray}
Consequently, $P \Big( \bigcup_{j=0}^{N-1} A_{k,2j}^c \Big) \leq e^M n^{-(k+1)}$, which implies
\begin{IEEEeqnarray}{rCl}
&          & P  \Bigg\{ \max_{h_{0,k} \in \cg_k^{(0)}} \frac{q}{n-t} \sum_{j=0}^{N-1}  \varepsilon_j   X^*_{2j}(h_{0,k})  
				> 2^{2k-6} \gamma \delta^2 / 3 \Bigg\} \nonumber \\
&   \leq   & e^M \, n^{-(k+1)} + P  \Bigg\{ \max_{h_{0,k} \in \cg_k^{(0)}} \frac{q}{n-t} \sum_{j=0}^{N-1}  \varepsilon_j   X^*_{2j}(h_{0,k}) \ind_{A_{k,2j}}  
				> 2^{2k-6} \gamma \delta^2 / 3 \Bigg\} \nonumber \\				
&  \leq    & e^M \, n^{-(k+1)} +  \sum_{h_{0,k} \in \cg_k^{(0)}}   
 			 P \Bigg\{ \frac{q}{n-t} \sum_{j=0}^{N-1}  \varepsilon_j   X^*_{2j}(h_{0,k}) \ind_{A_{k,2j}}
                > 2^{2k-6} \gamma \delta^2 / 3 \Bigg\} \,.	           
\end{IEEEeqnarray}
Now all involved variables are bounded. In order to apply Bernstein's inequality, we need bounds on the variance of the sum $\sum_{j=0}^{N-1}  \varepsilon_j   X^*_{2j}(h_{0,k}) \ind_{A_{k,2j}}$, and a bound on the absolute values of the addends $\varepsilon_j   X^*_{2j}(h_{0,k}) \ind_{A_{k,2j}}$. Furthermore, the addends have to be centred.
As for the variance bound, recall that the sequences $\{\varepsilon_j\}$ and $\{Y_i^*\}$ are independent. Hence, $\big\{ \varepsilon_j   X^*_{2j}(h_{0,k})  \ind_{A_{k,2j}} \big\}_{j \in \Nbb}$ is a sequence of i.i.d. random variables, and $E \big[ \varepsilon_j   X^*_{2j}(h_{0,k})  \ind_{A_{k,2j}} \big] = E \varepsilon_j \,  E \big[X^*_{2j}(h_{0,k})  \ind_{A_{k,2j}} \big]  = 0\,.$  Thus, for any $h_{0,k} \in \cg_k^{(0)} \subset \cg_k$, we can invoke Lemma \ref{lemma:variance} \ref{i} to conclude that there exists a number $n^*$ such that for all $n > n^*$ and for all $k \in \Nbb$
\begin{IEEEeqnarray}{rCl}
\var \Big( \sum_{j=0}^{N-1}  \varepsilon_j   X^*_{2j}(h_{0,k})  \ind_{A_{k,2j}} \Big) 
&   \leq  & \sum_{j=0}^{N-1}  E \big( X^*_{2j}(h_{0,k}) \big)^2 \nonumber \\
&   =     & \frac{(n-t)^2}{q^2} \var \left( \frac{q}{n-t} \sum_{j=0}^{N-1} X^*_{2j}(h_{0,k}) \right) \nonumber \\
&\leq     & \frac{(n-t)^2}{q^2} (M^2+2\,M^{3/2}+M)  \, 2^{2k+4} \gamma \delta^2  \frac{q}{n-t} \nonumber \\
& =       & C_1 \frac{(n-t)}{q} \, 2^{2k} \delta^2 \nonumber \\
& :=      & \sigma_n^2\,,
\end{IEEEeqnarray}
with $C_1 := 16\,(M^2+2\,M^{3/2}+M)\, \gamma$. Let us now bound the absolute values of the addends. Recall that $f_t(m;V^*_{t+2jq+i}) = 0$ and $\|g-m\|_\infty \leq M$ for all $g \in \cg$, and infer with Lemma \ref{lemma:covering} \ref{lemma:f_g} that
\begin{IEEEeqnarray}{rCl}
\big| f_t(h_{0,k};V^*_{t+2jq+i}) \big| 
 \leq  \frac{2 \|h_{0,k}-m\|_\infty}{1-L_1} (M + Y^*_{t+2jq+i+1}) 
 \leq  \frac{2 M}{1-L_1} (M + Y^*_{t+2jq+i+1}) \,.
\end{IEEEeqnarray}
In virtue of $E Y_{j} \leq M$ and the definition of the events $A_{k,2j}$, we obtain 
\begin{IEEEeqnarray}{rCl}
\big| \varepsilon_j X^*_{2j}(h_{0,k}) \ind_{A_{k,2j}} \big| 
& \leq &  \ind_{A_{k,2j}} \ \frac{1}{q}  \sum_{i=0}^{q-1} \Big( \big|  f_t(h_{0,k} ; V^*_{t+2jq+i}) \big| + E \big| f_t(h_{0,k} ; V^*_{t+2jq+i}) \big| \Big) \nonumber \\
& \leq & \frac{1}{q} \sum_{i=0}^{q-1} \frac{2M}{1-L_1}  \big( Y^*_{t+2jq+i+1} + 3M \big) \, \ind_{A_{k,2j}} \nonumber \\
& \leq & \frac{2M}{1-L_1} \big(2(k+2)\log(n) + 3M\big) \nonumber \\
& \leq & C_2 \, (k+1) \log n \nonumber\\
& =:   & b_n
\end{IEEEeqnarray} 
with $C_2 = \frac{10 \, M}{1-L_1}$, for all $n \geq e^{3\,M}$ and $k \in \Nbb$. We are ready to apply Bernstein's inequality. Introducing the variables
\begin{IEEEeqnarray}{cCc}
\eta_j  := \varepsilon_j X_{2j}^*(h_{0,k}) \ind_{A_{k,2j}} 
 $ \hspace{1cm} \text{and} \hspace{1cm} $
 x_n    :=  \frac{n-t}{q} 2^{2k-6} \gamma \delta^2/3\,,
\end{IEEEeqnarray}
we obtain the display
\begin{IEEEeqnarray}{rCl}
&        &  P \Bigg\{  \sum_{j=0}^{N-1}  \varepsilon_j   X^*_{2j}(h_{0,k}) \ind_{A_{k,2j}}
                > \frac{n-t}{q} 2^{2k-6} \gamma \delta^2 / 3 \Bigg\} 
    =     P \Bigg\{ \sum_{j=0}^{N-1} \eta_j > x_n \Bigg\} \,.
\end{IEEEeqnarray}
We have shown that the random variables $\eta_j$ are independent and centred, $|\eta_j| \leq b_n$, and $\var(\eta_0 + \ldots + \eta_{N-1}) \leq \sigma_n^2$. Bernstein's inequality yields,
\begin{IEEEeqnarray}{rCl}\label{bernsteinP2}
P \Bigg\{ \sum_{j=0}^{N-1} \eta_j > x_n \Bigg\} 
 \leq    \exp \Bigg( - \frac{1}{2} \frac{x^2}{\sigma_n^2 + x_n  b_n / 3}  \Bigg) \,.
\end{IEEEeqnarray}
In this case, $\sigma_n^2 \asymp 2^{2k} \frac{n-t}{q} \delta^2$ is dominated by $x_n\,b_n \asymp  2^{2k}\frac{n-t}{q} \delta^2 \, (k+1) \log n$ since 
\begin{IEEEeqnarray}{rCl}
\limsup_{n \to \infty} \ \sup_{k \in \Nbb} \ \frac{\sigma_n^2}{x_n b_n} \leq \limsup_{n \to \infty} \ \sup_{k \in \Nbb} \frac{C}{k+1} \frac{ 1 }{ \log n} \leq C \limsup_{n \to \infty} \ (\log n)^{-1} = 0
\end{IEEEeqnarray}
with a positive constant $C$. Hence, there exists a number $n^{**}$, independent of $k$, such that $\sigma_n^2 \leq \frac{2}{3} x_n b_n$ for all $n > n^{**}$ and all $k \in \Nbb$. Consequently, under the assumptions $\delta_n =C_\delta \ n^{-1/3} \log n$ and $t(n) \asymp q(n) \asymp \log n$, we obtain for the exponent in \eqref{bernsteinP2},
\begin{IEEEeqnarray}{rCl}
\liminf_{n \to \infty} \frac{1}{2} \, \frac{x_n^2}{\sigma_n^2 + x_n b_n/3} \cdot n^{-1/3}
& \geq  &  \liminf_{n \to \infty} \, \frac{x_n}{2 \, b_n} \cdot n^{-1/3} \nonumber \\
&  =    & \frac{2^{-7}\gamma\,C_\delta}{3\, C_2} \, \frac{2^{2k}}{k+1} \ \liminf_{n \to \infty} \frac{n-t}{q(n)} \frac{n^{-2/3} (\log n)^2}{\log n} \cdot n^{-1/3} \nonumber \\
& \geq  & C_3 \, 2^k
\end{IEEEeqnarray}
for $C_3 = 2^{-7}\gamma\,C_\delta \big/(3\, C_2) \ \liminf_n \log(n)/q(n)  > 0$. Hence, there exists a number $n_0 \geq \max\{ e^{3M} \,,\, n^* \,,\, n^{**}\}$ such that $\frac{1}{2} {x^2}\big/{(\sigma_n^2 + x_n  b_n / 3)} \geq C_3 \, 2^k \, n^{1/3}$ for all $n\geq n_0$ and all $k \in \Nbb$. In conclusion, for all $k$ and all $n \geq n_0$,
\begin{IEEEeqnarray}{rCl}
  P \Bigg\{  \sum_{j=0}^{N-1}  \varepsilon_j   X^*_{2j}(h_{0,k}) \ind_{A_{k,2j}}
                > \frac{n-t}{q} 2^{2k-6} \gamma \delta^2 / 3 \Bigg\}
\leq \exp \Big( -C_3 \, 2^k n^{1/3}  \Big)\,.              
\end{IEEEeqnarray}
Note that $n_0$ does not depend on $k$. Moreover,
the previous bound is independent of the specific function $h_{0,k} \in \cg_k^{(0)}$.

Let $C_B = B_n/\log n$ and note that $C_B = O(1)$. Since $\sup_k \ \log \big( \# \cg_k^{(0)} \big) \leq \sup_k \ C_4\, 2^{-k} \, \log(n) \big/ \delta \leq C_4 \, n^{1/3}$ with
$C_4 := 2MC_B\big/\sqrt{\gamma}$, which follows from the bounds in Lemma \ref{lemma:covering} \ref{prop:g_s} with $s=0$, we conclude,
\begin{IEEEeqnarray}{rRL}
& \sum_{h_{0,k} \in \cg_k^{(0)}}   
 			 P \Bigg\{ \frac{q}{n-t} \sum_{j=0}^{N-1}  \varepsilon_j   X^*_{2j}(h_{0,k}) \ind_{A_{k,2j}}
                > 2^{2k-6} \gamma \delta^2 / 3 \Bigg\} \nonumber 
&  \leq \# \cg_k^{(0)} \
 \exp 
\big( - C_3 \, 2^{k} n^{1/3} \big) \nonumber \\
&  \leq  \exp 
\Big(  (C_4 - C_3)\, 2^k  n^{1/3} \Big)
\end{IEEEeqnarray}
for all $n \geq n_0$ and $k \in \Nbb$. We can choose $C_\delta$ independently of $n$ and $k$ such that $C_3 = 2\, C_4$. Hence,   
\begin{IEEEeqnarray}{rCl}
&        &  \sum_{h_{0,k} \in \cg_k^{(0)}}   
 			 P \Bigg\{ \frac{q}{n-t} \sum_{j=0}^{N-1}  \varepsilon_j   X^*_{2j}(h_{0,k}) \ind_{A_{k,2j}}
                > 2^{2k-6} \gamma \delta^2 / 3 \Bigg\}  
 \leq   \exp \big( - C_4 \, 2^k n^{1/3} \big)
\end{IEEEeqnarray}
for all $n \geq n_0$ and all $k \in \Nbb$.
In conclusion, there exists a natural number $n^{(P_2)} \geq n_0$ such that for all $n \geq n^{(P_2)}$ and all $k \in \Nbb$ the term $P_2$ is bounded by
\begin{IEEEeqnarray}{rCl}
&          & P  \Bigg\{ \sup_{g \in \cg_k} \frac{q}{n-t} \sum_{j=0}^{N-1}  \varepsilon_j   X^*_{2j}(g_{0,k})   
				> 2^{2k-6} \gamma \delta^2 / 3 \Bigg\} 
   <      e^M \, n^{-(k+1)} +  \exp 
\Big( - C'\, 2^{k} \, \, n^{1/3} \Big)\,,
\end{IEEEeqnarray}
with some positive constant $C'$.

It is left to find a bound for the third term, $P_3$. For that sake, we define the sets $\cm_{s,k}$ by
\begin{IEEEeqnarray}{rCl}
\cm_{s,k} := \Big\{ ( g_1, g_2 ) \colon g_1 \in \cg_k^{(s)} \,,\, g_2 \in \cg_k^{(s+1)} \,,\, \|g_1 - g_2 \|_\infty \leq 2^{-s} 2^{k+2} \sqrt{\gamma} \delta \Big\}\,.
\end{IEEEeqnarray}  
By definition of $\{g_{s,k}\colon {s=0,\ldots,\check{S}}\}$, $\|g-g_{s,k}\|_\infty \leq  2^{-s} 2^{k+1} \sqrt{\gamma} \delta
$ as well as $\|g-g_{s+1,k}\|_\infty  \leq  2^{-(s+1)} 2^{k+1} \sqrt{\gamma}\delta$, and by the triangle inequality, $\|g_{s,k}-g_{s+1,k} \|_\infty
 \leq  \|g_{s,k}- g\|_\infty 
         + \|g - g_{s+1,k} \|_\infty 
 \leq  2^{-s} 2^{k+2} \sqrt{\gamma} \delta$.
Hence, $(g_{s,k},g_{s+1,k}) \in \cm_{s,k}$, and we conclude,
\begin{IEEEeqnarray}{rCl}
&   &\sup_{g \in \cg_k} \sum_{s=0}^{\check{S}-1} \frac{q}{n-t} \Big|\sum_{j=0}^{N-1}  \varepsilon_j \big(X^*_{2j}(g_{s+1,k}) - X^*_{2j}(g_{s,k}) \big) \Big|  
   =    \max_{(g_1,g_2) \in \cm_{k,s}} \sum_{s=0}^{\check{S}-1} \frac{q}{n-t} \Big|\sum_{j=0}^{N-1}  \varepsilon_j  \big(X^*_{2j}(g_{2}) - X^*_{2j}(g_{1}) \big) \Big| \nonumber  \\
&   &\hspace{4cm} \leq     \sum_{s=0}^{\check{S}-1}  \Bigg[ \max_{(g_1,g_2) \in \cm_{k,s}}  \frac{q}{n-t} \Big|\sum_{j=0}^{N-1}  \varepsilon_j  \big(X^*_{2j}(g_{2}) - X^*_{2j}(g_{1}) \big) \Big| \Bigg] \,.  
 \label{eq:inf-max}
\end{IEEEeqnarray} 
Using again Lemma \ref{lemma:covering} \ref{lemma:f_g}, we observe for $(g_1,g_2) \in \cm$, 
\begin{IEEEeqnarray}{rCl}
\Big| \,\big(X^*_{2j}(g_{2}) - X^*_{2j}(g_{1}) \big) \Big| 
&\leq & \frac{1}{q}  \sum_{i=0}^{q-1}   \big| f_t({g_1};V_{t+2jq+i}^*) - f_t({g_2};V_{t+2jq+i}^*) \big| \nonumber\\
&     & + \frac{1}{q}  \sum_{i=0}^{q-1}   E\big| f_t({g_1};V_{t+2jq+i}^*) - f_t({g_2};V_{t+2jq+i}^*) \big| \nonumber\\
&\leq &   \frac{2^{-s} 2^{k+3} \sqrt{\gamma} \delta}{1-L_1}   \frac{1}{q}  \sum_{i=0}^{q-1}  (Y_{t+2jq+i+1}^* + 3\, M )  \,.
\end{IEEEeqnarray}
Since the variables $\varepsilon_j$ are independent and centred, we can apply Hoeffding's inequality 
\citep[p.114]{gine2015} conditionally on $\big( V_t^* , \ldots, V_{n-1}^* \big)'$ and obtain for $(g_1,g_2) \in \cm_{k,s}$,
\begin{IEEEeqnarray}{rCl}
&       & P \Bigg\{ \frac{q}{n-t} \Big|\sum_{j=0}^{N-1}  \varepsilon_j \big(X^*_{2j}(g_{2}) - X^*_{2j}(g_{1}) \big) \Big| > x \, \Bigg| \, V_t^* , \ldots , V_{n-1}^*  \Bigg\} \nonumber \\
& \leq  & 2\, \exp \left( - \frac{1}{2}\frac{x^2} 
{\frac{q^2}{(n-t)^2} \sum_{j=0}^{N-1}   \big(X^*_{2j}(g_{2}) - X^*_{2j}(g_{1}) \big)^2   } \right) \nonumber \\
& \leq & 2\, \exp \left( - \frac{ x^2}{2 \sum_{j=0}^{N-1}  \Big[ \frac{2^{-s} 2^{k+3} \sqrt{\gamma} \delta}{(n-t)(1-L_1)} \sum_{i=0}^{q-1} \big(Y^*_{t+2jq+i+1} + 3\, M \big) \Big]^2   } \right)\,.
\end{IEEEeqnarray}
We apply an integrated Bernstein-type inequality 
\citep[p.408]{Doukhan95}. For random variables $\xi_1, \ldots,\xi_m$ that satisfy the tail bound $P\{|\xi_i| > x \} \leq 2 \, e^{-\frac{1}{2} \frac{x^2}{b+ax}}$ for some $a,b \geq 0$ and all $x \geq 0$, the inequality states, 
\begin{IEEEeqnarray}{rCl}
E \, \big[\max_{i \leq m} |\xi_i| \big] \leq C \big( \sqrt{ b \log m } + a \log m \big)
\end{IEEEeqnarray}
for some universal constant $C > 0$. We apply this inequality with $a=0$ and
\begin{IEEEeqnarray}{rCl}
b =  \sum_{j=0}^{N-1}  \Big[ \frac{2^{-s} 2^{k+3} \sqrt{\gamma} \delta}{(n-t)(1-L_1)} \sum_{i=0}^{q-1} \big(Y^*_{t+2jq+i+1} +3\, M \big) \Big]^2\,.
\end{IEEEeqnarray}
Conditionally on $V^*_t, \ldots, V^*_{n-1}$, this yields almost surely in $P$,
\begin{IEEEeqnarray}{rCl}
&      & E \Bigg[ \max_{(g_1,g_2) \in \cm_{k,s}}  \frac{q}{n-t} \Big|\sum_{j=0}^{N-1}  \varepsilon_j \big(X^*_{2j}(g_{2}) - X^*_{2j}(g_{1}) \big) \Big| \, \Bigg| V_t^*, \ldots, V_{n-1}^* \Bigg] \nonumber  \\
& \leq & C \sqrt{\log \# \cm_{k,s}} \ \sqrt{  \sum_{j=0}^{N-1}  \Big[ \frac{2^{-s} 2^{k+3} \sqrt{\gamma} \delta}{(n-t)(1-L_1)} \sum_{i=0}^{q-1} \big(Y^*_{t+2jq+i+1} +3 M \big) \Big]^2 }
\label{eq:integrated_bs}
\end{IEEEeqnarray}
for some positive constant $C$. Furthermore, as $E_{|\cf^*_i}Y_{i+1}^{*\,2} = \var_{|\cf^*_i} Y^*_{i+1} + (E_{|\cf^*_i} Y^*_{i+1})^2 \leq M +M^2$, we conclude invoking the triangle inequality for the $L_2(P)$ norm,
\begin{IEEEeqnarray}{rCl}
       \Bigg( E \Bigg[\sum_{i=0}^{q-1} \big( Y^*_{t+2jq+i+1} + 3 \, M \big) \Bigg]^2 \Bigg)^{1/2} \nonumber 
& \leq & \sum_{i=0}^{q-1} \Big( E \big[ Y^*_{t+2jq+i+1} + 3 \, M \big]^2  \Big)^{1/2} 
  =      q \ E  \big(  Y^*_{0} + 3 \, M \big)^2 \\
& \leq & 16 \, (M^2 +M ) \, q  
 =:    \sqrt{\tilde{C}} \, q\,.  \label{eq:q} 
\end{IEEEeqnarray}
Recall that $N < \frac{n-t}{q}$ and that $x\mapsto\sqrt{x}$ is a concave function. Define $C_5 := C \,  \frac{\sqrt{ 64 \, \tilde{C} \, \gamma} }{1-L_1}$ and integrate inequality \eqref{eq:integrated_bs} with respect to $P$. This yields,
\begin{IEEEeqnarray}{rClr}
&       & E \Bigg[ E \Bigg[ \max_{(g_1,g_2) \in \cm_{k,s}}  \frac{q}{n-t} \Big|\sum_{j=0}^{N-1}  \varepsilon_j \big(X^*_{2j}(g_{2}) - X^*_{2j}(g_{1}) \big) \Big| \, \Bigg| V_t^*, \ldots, V_{n-1}^* \Bigg] \Bigg] & \nonumber \\
& \leq  & C \sqrt{\log \# \cm_{k,s}} 
\sqrt{\frac{64 \, \gamma}{(1-L_1)^2} \sum_{j=0}^{N-1} \Bigg[ \frac{2^{-s+k}\delta}{(n-t)} \Bigg]^2  E \Bigg[ \sum_{i=0}^{q-1} \big( Y^*_{t+2jq+i+1} + 3M \big) \Bigg]^2}
                        & \nonumber \\
& \leq     & C_5 \sqrt{\log \# \cm_{k,s}} \frac{2^{k-s} \delta \sqrt{q}}{\sqrt{n-t}}
      \label{eq:EW-max} &
\end{IEEEeqnarray}
Concerning the cardinality of the set $\cm_{k,s}$, we observe that
\begin{IEEEeqnarray}{rCl}
\# \cm_{k,s} \leq \# \cg_k^{(s)} \# \cg_k^{(s+1)} \leq e^{2MB_n \, (2^{s-k} + 2^{s+1-k})/(\sqrt{\gamma}\delta)} \leq e^{2MB_n\, 2^{s+2-k}/ (\sqrt{\gamma} \delta)}\,,
\end{IEEEeqnarray}
or, with $C_6 := \sqrt{\frac{8 M C_B}{\sqrt{\gamma}}}$,
\begin{IEEEeqnarray}{rCl}
\sqrt{ \log \# \cm_{k,s}} \leq C_6 \ 2^{s/2} 2^{-k/2} n^{1/6}\,, \label{eq:M}
\end{IEEEeqnarray}
where again $C_B = B_n/\log n$. Applying Markov's inequality, we finally arrive at a bound for $P_3$:
\begin{IEEEeqnarray}{rClr}
&         &  P \Bigg\{ \sup_{g \in \cg_k}  \sum_{s=0}^{\check{S}-1} \frac{q}{n-t} \Big|\sum_{j=0}^{N-1}  \varepsilon_j  \big(X^*_{2j}(g_{s+1,k}) - X^*_{2j}(g_{s,k}) \big) \Big| > 2^{2k-6} \gamma \delta^2 / 3 \Bigg\} 
                      & \nonumber \\
& \leq    & \frac{3 \cdot 2^6}{\gamma} 2^{-2k} \delta^{-2} \ E \Bigg[\  \sup_{g \in \cg_k} \ \sum_{s=0}^{\check{S}-1} \frac{q}{n-t} \Big|\sum_{j=0}^{N-1}  \varepsilon_j \big(X^*_{2j}(g_{s+1,k}) - X^*_{2j}(g_{s,k}) \big) \Big| \Bigg]
                      &  \nonumber \\
& \leq    &  \frac{3 \cdot 2^6}{\gamma} 2^{-2k} \delta^{-2}\sum_{s=0}^{\check{S}-1}  E \Bigg[ \max_{(g_1,g_2) \in \cm_{k,s}}  \frac{q}{n-t} \Big|\sum_{j=0}^{N-1}  \varepsilon_j \big(X^*_{2j}(g_{2}) - X^*_{2j}(g_{1}) \big) \Big|  \Bigg]    
                      & \qquad  \textit{by \eqref{eq:inf-max}} \nonumber \\
&\leq     & \frac{3 \cdot 2^6 \, C_5}{\gamma} \   2^{-2k} \delta^{-2} \sum_{s=0}^\infty  \sqrt{\log \# \cm_{k,s}} \  \frac{2^{k-s} \delta \sqrt{q}}{\sqrt{n-t}} 
                      & \textit{by \eqref{eq:EW-max}} \nonumber \\
&\leq     & \underbrace{\frac{3 \cdot 2^6 \, C_5}{\gamma} C_6}_{=: C_7}  \ 2^{-2k} \delta^{-2} \, \sum_{s=0}^\infty  2^{s/2} 
2^{-k/2} n^{1/6} \frac{2^{k-s} \delta \sqrt{q}}{\sqrt{n-t}}
                      & \textit{by \eqref{eq:M}} \nonumber \\
&  =   &  2^{-3k/2} \underbrace{n^{1/2} (\log n)^{-1} (n-t)^{-1/2} q^{1/2} }_{\asymp (\log n)^{-1/2}} C_\delta^{-2} C_7  \sum_{s=0}^\infty 2^{-s/2}\nonumber\\
& \leq & C'' \ 2^{-3k/2} \ (\log n)^{-1/2} \nonumber
\end{IEEEeqnarray}
for all $k \in \Nbb$ and all but finitely many $n\in \Nbb$ if $C''>0$ is chosen sufficiently large. Of course, $2^{-3k/2} \leq 2^{-k}$ for all $k \in \Nbb$. Thus, there exists a number $n^{(P_3)} \in \Nbb$ such that for $n \geq n^{(P_3)}$ and all $k \in \Nbb$
\begin{IEEEeqnarray*}{rCl}
&         &   P \Bigg\{ \sup_{g \in \cg_k}  \sum_{s=0}^{\check{S}-1} \frac{q}{n-t} \Big|\sum_{j=0}^{N-1}  \varepsilon_j  \big(X^*_{2j}(g_{s+1,k}) - X^*_{2j}(g_{s,k}) \big) \Big| > 2^{2k-6} \gamma \delta^2 / 3 \Bigg\} \leq C' \ 2^{-k} (\log n)^{-1/2}.
\end{IEEEeqnarray*}

In conclusion, we can choose the constant $C_\delta$ independently of $n$ and $k$ such that
\begin{IEEEeqnarray*}{rCl}
 P_1 + P_2 + P_3  & \leq  & C \ 2^{-2k} \frac{q}{n-t} + e^M \ n^{-(k+1)} + \exp\Big( - C' \ 2^{k} \, n^{1/3} \Big) + C'' \ 2^{-k} (\log n)^{-1/2}
\end{IEEEeqnarray*}
for all $n \geq \max\big\{ n^{(P_1)},\, n^{(P_2)},\, n^{(P_3)}\big\}$ and all $k \in \Nbb$. 
\end{proof}

\section*{Acknowledgments}

MW thanks Florian Wechsung for revising the manuscript and giving some valuable comments.

\bibliographystyle{apalike}
\bibliography{note}  

\end{document}